\newtheorem{theorem}{Theorem}[section]
\newtheorem{lemma}[theorem]{Lemma}
\newtheorem{proposition}[theorem]{Proposition}
\newtheorem{corollary}[theorem]{Corollary}
\def\root{\ensuremath{\rho}}
\def\trees{\ensuremath{\mathcal{T}_n}} 
\def\nets{\ensuremath{\mathcal{N}_n}}
\newcommand{\PR}{\textup{PR}\xspace}
\newcommand{\PRZ}{\textup{PR$^0$}\xspace}
\newcommand{\PRP}{\textup{PR$^+$}\xspace}
\newcommand{\PRM}{\textup{PR$^-$}\xspace}
\newcommand{\SNPR}{\textup{SNPR}\xspace}
\newcommand{\SNPRZ}{\textup{SNPR$^0$}\xspace}
\newcommand{\SNPRP}{\textup{SNPR$^+$}\xspace}
\newcommand{\rSPR}{\textup{rSPR}\xspace}
\DeclareMathOperator{\dAD}{\ensuremath{d_{\text{\textup{AD}}}}}
\DeclareMathOperator{\dPR}{\ensuremath{d_{\PR}}}
\DeclareMathOperator{\dSNPR}{\ensuremath{d_{\SNPR}}}
\author{Jonathan Klawitter\thanks{Supported by the New Zealand Marsden Fund.}}
\title[The agreement distance of rooted phylogenetic networks]{The agreement distance of rooted phylogenetic networks}
\affiliation{School of Computer Science, University of Auckland, New Zealand}
\keywords{phylogenetic network, rSPR, prune and regraft, maximum agreement forest, agreement distance}
\begin{document}
\publicationdetails{21}{2019}{3}{19}{4931}
\maketitle
\pdfbookmark[1]{Abstract}{Abstract} 
\begin{abstract}
The minimal number of rooted subtree prune and regraft (rSPR) operations needed to transform one
phylogenetic tree into another one induces a metric on phylogenetic trees -- the rSPR-distance.
The rSPR-distance between two phylogenetic trees $T$ and $T'$ can be characterised by a maximum
agreement forest; a forest with a minimum number of components that covers both $T$ and $T'$.
The rSPR operation has recently been generalised to phylogenetic networks with, among others, the
subnetwork prune and regraft (SNPR) operation.
Here, we introduce maximum agreement graphs as an explicit representations of differences of
two phylogenetic networks, thus generalising maximum agreement forests.
We show that maximum agreement graphs induce a metric on phylogenetic networks -- the agreement distance.
While this metric does not characterise the distances induced by SNPR and other generalisations of
rSPR, we prove that it still bounds these distances with constant factors.
\end{abstract}

\section{Introduction} \label{sec:introduction}
A \emph{phylogenetic tree} is a tree with its leaves labelled by a set of taxa; for example a set
of organisms, species or languages~\citep{SS03,Dun14}.
Phylogenetic trees are used to visualise and study the inferred evolutionary history of such taxa. 
A \emph{phylogenetic network} is a graph with its leaves labelled by a set of taxa, thus generalising a phylogenetic tree.
While a phylogenetic tree models only bifurcating events, a phylogenetic network can also model reticulation events 
like hybridisation, recombination and horizontal gene transfer~\citep{HRS10}.
The phylogenetic trees and networks considered here are all rooted and binary, i.e., they are
directed acyclic graphs and all their vertices except their roots and leaves have degree three.

A tree rearrangement operation transforms one phylogenetic tree into another via a local graph-based change. 
For example, the \emph{rooted subtree prune and regraft} (\rSPR) operation prunes (cuts) 
a subtree of a phylogenetic tree and then regrafts (attaches) it to an edge of the remaining tree, 
resulting in another phylogenetic tree. See \cref{fig:rSPR:example} for an example.
The minimal number of \rSPR operations needed to transform one phylogenetic tree into another
defines the \rSPR-distance, which is a metric on the set of phylogenetic trees~\citep{BS05}.
The problem of computing the \rSPR-distance of two phylogenetic trees is known to be NP-hard, 
but fixed-parameter tractable in its natural parameter~\citep{BS05}.
Moreover, the rSPR-distance induces the notion of neighbourhoods and thus organises the set of
phylogenetic trees into a space. This is important in local search and MCMC algorithms that compute
optimal phylogenetic trees~\citep{Fel04,StJ17}.

\begin{figure}[htb]
  \centering
  \includegraphics{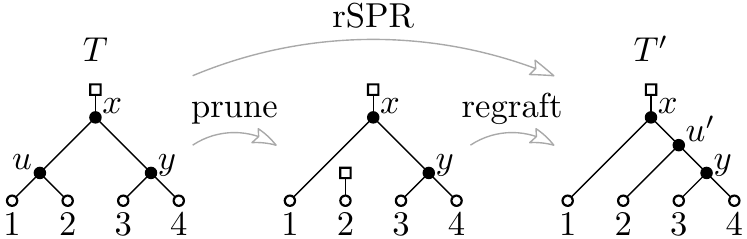}
  \caption{An rSPR operation that transforms $T$ into $T'$ by first pruning the edge $(u, 2)$ at $u$ 
  and then regrafting it to the edge $(x, y)$.
  This process suppresses the vertex $u$ and creates a new vertex $u'$ that subdivides the edge $(x, y)$.}
  \label{fig:rSPR:example}
\end{figure}

An rSPR-sequence that transforms one tree into another tree describes a series of prunings. 
The subtrees unchanged by the sequence form an \emph{agreement forest} for these two trees.
In other words, an agreement forest is the set of trees on which the two trees ``agree'' upon and
that if put together cover each tree.
See \cref{fig:rSPR:sequence} for an example. 
A \emph{maximum agreement forest} (one that has a minimum number of trees) for two phylogenetic trees characterises their rSPR-distance~\citep{BS05}. 
This means that in order to compute or reason about the rSPR-distance it suffices to consider one static structure, 
a maximum agreement forest, instead of a shortest rSPR-sequence.
The notion of maximum agreement forests has proven to be the underpinning concept for almost all
theoretical results as well as practical algorithms that are related to computing the
rSPR-distance~\citep{BS05,BMS08,Wu09,BStJ09,WBZ13,CFS15,BSTW17}. 

\begin{figure}[htb]
  \centering
  \includegraphics{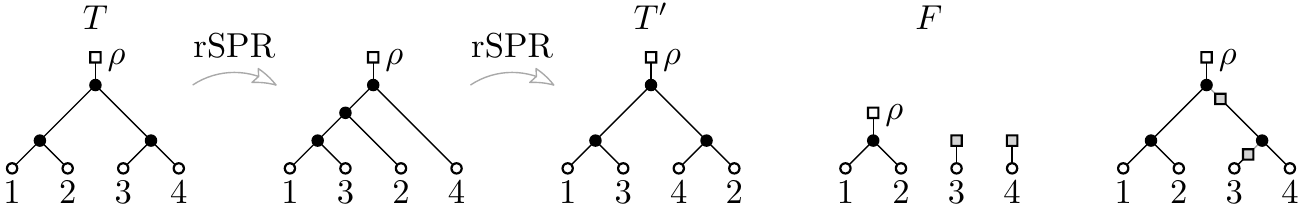}
  \caption{An rSPR-sequence of length two that transforms $T$ into $T'$, an agreement forest $F$ for
  $T$ and $T'$, and on the right how $F$ covers $T$. }
  \label{fig:rSPR:sequence}
\end{figure}

The rSPR operation was recently generalised to network rearrangement operations for phylogenetic networks~\citep{BLS17,GvIJLPS17}.
These generalisations have been studied in terms of computational complexity, shortest sequences and
neighbourhoods~\citep{BLS17,GvIJLPS17,JJEvIS17,FHMW17,Kla17,KL18}.
Like rSPR these operations allow pruning and regrafting of edges. They add extra operations of
adding and removing reticulations (vertices with in-degree two).
It has to be distinguished whether edges can be pruned at their head and tail or only at their tail,
and whether networks can contain parallel edges or not.
Here, we allow parallel edges and consider the \emph{subnetwork prune and regraft} (\SNPR) operation
by \citet{BLS17}, which only allows pruning at the tail, and the \emph{prune and regraft} (\PR) operation that also allows pruning at the head.
Since computing the rSPR-distance between two phylogenetic trees is NP-hard~\citep{BLS17}, it is not
surprising that computing the distance of its generalisations is also NP-hard~\citep{BLS17,GvIJLPS17,JJEvIS17}.
The study of shortest SNPR-sequences and PR-sequences has identified further difficulties for the computation of these distances~\citep{KL18}.
Together with the importance of agreement forests for the rSPR-distance, this motivates the
questions of whether agreement forests are generalisable for rooted binary phylogenetic networks
and, if so, whether they characterise the SNPR or PR-distance.

In this paper, we partially answer these two questions.
First, with \emph{(maximum) agreement graphs} we introduce a generalisation of
(maximum) agreement forests for rooted binary phylogenetic networks (\cref{sec:AG}). 
Then we show that maximum agreement graphs induce a metric on phylogenetic networks, which we call
the \emph{agreement distance} (\cref{sec:AD}).
While maximal agreement forests characterise the rSPR-distance, we show that this agreement
distance does in general not equal the distance induced by a generalisation of rSPR.
On the upside, we prove that the agreement distance bounds the PR-distance and the
SNPR-distance from below naturally and from above with constant factors three
and six, respectively (\cref{sec:bound}).
We end this paper with some concluding remarks (\cref{sec:discussion}).

\section{Preliminaries} \label{sec:preliminaries}
This section contains the definitions of rooted binary phylogenetic networks and trees, of network rearrangement operations and of their induced metrics.
The definition of an agreement graph is given in the next section.

\paragraph{Phylogenetic networks and trees.}
\pdfbookmark[2]{Phylogenetic networks}{PhyNets}
Let $X = \{1, 2, \ldots, n\}$ be a finite set.
A \emph{rooted binary phylogenetic network} $N$ on $X$ is a rooted directed acyclic graph with the
vertices being
\begin{itemize}
  \item the unique \textit{root} labelled $\rho$ with in-degree zero and out-degree one,
  \item \emph{leaves} with in-degree one and out-degree zero bijectively labelled with $X$,
  \item \emph{inner tree vertices} with in-degree one and out-degree two, and
  \item \emph{reticulations} with in-degree two and out-degree one. 
\end{itemize}
The \emph{tree vertices} of $N$ are the union of the inner tree vertices, the leaves and the root. 
An edge $e = (u, v)$ is called a \emph{reticulation edge} if $v$ is a reticulation. 
Following \citet{BLS17}, edges in $N$ can be in \emph{parallel}, that is,
two distinct edges join the same pair of vertices. 
For two vertices $u$ and $v$ in $N$, we say that $u$ is a \emph{parent} of
$v$ and $v$ is a \emph{child} of $u$ if there is an edge $(u, v)$ in $N$.
Similarly, we say that $u$ is an \emph{ancestor} of $v$ and $v$ is a \emph{descendant} of $u$ if
there is a directed path from $u$ to $v$ in $N$. 
An edge $(u, v)$ is an \emph{ancestor} of an edge $(x, y)$ and a vertex $x$ if $v = x$ or if $v$ is an ancestor of
$x$.
In this case, $(x, y)$ is a \emph{descendant} of $(u, v)$ and $v$.
Note that a vertex or an edge cannot be its own ancestor or descendant.

A \emph{rooted binary phylogenetic tree} on $X$ is a rooted binary phylogenetic network that has no reticulations. 
To ease reading, we refer to a rooted binary phylogenetic network (resp. rooted binary phylogenetic
tree) on $X$ simply as a phylogenetic network or network (resp. phylogenetic tree or tree).
Furthermore, let $\nets$ denote the set of all phylogenetic networks on $X$ and let $\trees$ denote
the set of all phylogenetic trees on $X$ where $n = \lvert X \rvert$.

\paragraph{\PR and \SNPR.}
\pdfbookmark[2]{PR and SNPR}{PR}
Let $N \in \nets$ with root $\root$ and let $e = (u, v)$ be an edge of $N$. 
Then the \emph{prune and regraft} (\PR) operation is an operation that transforms $N$ into a
phylogenetic network $N' \in \nets$ in one of the following four ways:

\begin{enumerate}[leftmargin=*,label=(PR$-$)]
  \item[(\PRZ)] If $u$ is a tree vertex (and $u \neq \root$), then delete $e$, suppress $u$,
  subdivide an edge that is not a descendant of $v$ with a new vertex $u'$, and add the edge 
  $(u', v)$; or \\
  if $v$ is a reticulation, then delete $e$, suppress $v$, subdivide an edge that is not an
  ancestor of $u$ with a new vertex $v'$, and add the edge $(u, v')$.
  \item[(\PRP)] Subdivide $(u, v)$ with a new vertex $v'$, subdivide an edge in the resulting
  graph that is not a descendant of $v'$ with a new vertex $u'$, and add the edge $(u', v')$.
  \item[(\PRM)] If $u$ is a tree vertex and $v$ is a reticulation, then delete $e$ and suppress
  $u$ and $v$.
\end{enumerate}

As the name suggests, we understand a \PRZ operation as the process of ``pruning'' the edge $(u, v)$ 
and then ``regrafting'' it to the subdivision of another edge.
We say that the \PRZ \emph{prunes} $(u, v)$ {at} $u$ 
if $u$ is the vertex that gets suppressed and a new vertex $u'$ gets added.
A \PRZ operation that prunes an edge $(u, v)$ at its head vertex $v$ (resp. tail
vertex $u$) is called a \emph{head (tail)} \PRZ operation. 
Note that \PRZ operations do not change the number of reticulations, while \PRM decreases it by
one and \PRP increases it by one. These operations are illustrated in \cref{fig:PR:example}.
Furthermore, note that the  \emph{head} and \emph{tail moves} defined by \citet{GvIJLPS17} 
(and further studied by \citet{JJEvIS17}) conceptually equal head and tail \PRZ, 
but are restricted to networks without parallel edges.

\begin{figure}[htb]
 \centering
 \includegraphics{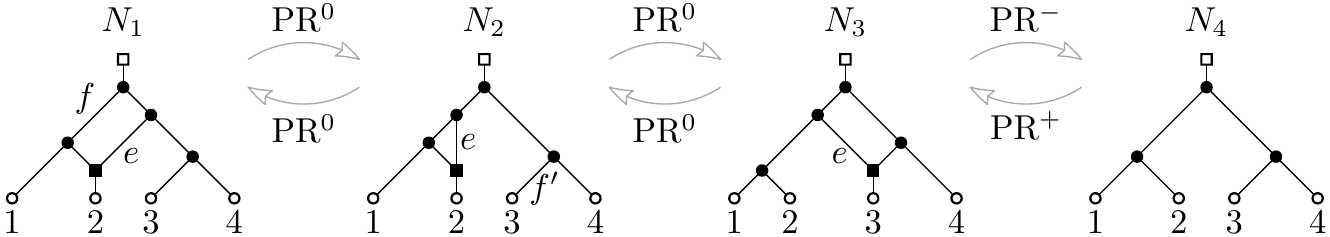}
 \caption{The phylogenetic network $N_2$ (resp. $N_3$) can be obtained from $N_1$ (resp. $N_2$) by the tail \PRZ
 (resp. head \PRZ) that prunes $e$ and regrafts it to $f$ (resp. $f'$).
 The phylogenetic network $N_4$ can be obtained from $N_3$ with the \PRM that removes $e$. 
 Each operation has its corresponding \PRZ and \PRP operation, respectively, that reverses the
 transformation.}
 \label{fig:PR:example}
\end{figure} 

The \emph{subnet prune and regraft} (\SNPR) operation by \citet{BLS17} equals the \PR operation without head \PRZ.
That is, an edge $(u, v)$ can only be pruned at $u$, but not at $v$. 
Further note that the \PR operation restricted to phylogenetic trees is the same as the well known \rSPR operation.
As \citet{BLS17} and \citet{GvIJLPS17} have shown, the different types of \PR operations are all reversible. 
This means that for every \PRZ (or \SNPRZ) that transforms $N$ into $N'$ there exists a \PRZ (resp. \SNPRZ)
that transforms $N'$ into $N$, and that for every \PRP there exists an inverse \PRM.

\paragraph{Distances.}
\pdfbookmark[2]{Distances}{distances} 
Let $N, N' \in \nets$. 
A \emph{\PR-sequence} from $N$ to $N'$ is a sequence 
\begin{displaymath}
	\sigma = (N = N_0, N_1, N_2, \ldots, N_k = N') 
\end{displaymath}
of phylogenetic networks such that, for each $i \in \{ 1, 2, \ldots, k \}$, 
$N_i$ can be obtained from $N_{i-1}$ by a single \PR. The \emph{length} of $\sigma$ is $k$.
The \emph{\PR-distance} $\dPR(N, N')$ between $N$ and $N'$ is the length of a shortest \PR-sequence
from $N$ to $N'$. The \SNPR-distance is defined analogously. 
\citet{BLS17} have shown that the \SNPR-distance is a metric on $\nets$. 
By the definition of the \PR operation and its relation to the \SNPR operation, it thus follows that the \PR-distance is a metric on $\nets$.
Moreover, when restricted to $\trees$, the \SNPR- and the \PR-distance equal the analogously defined
\rSPR-distance~\citep{BLS17}.

\section{Agreement graph} \label{sec:AG}
In this section, we define maximum agreement graphs for two phylogenetic networks $N$
and $N'$. The main idea is to find a graph that can be obtained from both $N$ and $N'$ with a
minimum number of prunings (as defined below).
Throughout this section, let $N, N' \in \nets$ with $r$ and $r'$ reticulations, respectively.
Without loss of generality, assume that $r' \geq r$ and let $l = r' - r$ be the difference of the
number of reticulations of $N$ and $N'$. Furthermore, we assume that a path in a directed graph is
a directed path and contains at least one edge.

\paragraph{Prunings and sprouts.}
\pdfbookmark[2]{Prunings and sprouts}{prunings}
In the last section, we have interpreted the pruning of an edge as half a step of a \PRZ operation (as the name prune and regraft suggests). 
We now extend this to a full operation that yields a new graph.
Let $G$ be a directed graph. Let $u$ be a vertex of $G$ that is either labelled or has degree three. 
Let $(u, v)$ be an edge of $G$.
Then a \emph{pruning} of $(u, v)$ at $u$ is the process of deleting $(u, v)$ 
and adding a new edge $(\bar u, v)$, where $\bar u$ is a new (unlabelled) vertex. 
If $u$ is now an in-degree one, out-degree one vertex, then suppress $u$. 
Note that a pruning does not remove a label from $u$. 
The definition for a pruning of the edge $(u, v)$ at $v$ is analogous.
We mostly apply a pruning to a phylogenetic network or a graph derived from a phylogenetic network.
Therefore, the restriction that $u$ is either labelled or has degree three can be understood as $u$
being either the root $\rho$, a (labelled) leaf, or an internal vertex.

A \emph{sprout} of $G$ is an unlabelled degree one vertex of $G$. 
For example, applying a pruning to a phylogenetic network yields a graph with exactly one sprout.
A \emph{t-sprout} (resp. \emph{h-sprout}) is a sprout that is the tail (resp. head) of its incident edge.

\paragraph{Subdivisions and embeddings.}
\pdfbookmark[2]{Subdivisions and embeddings}{subdivisionEmbedding}
Let $G$ be a directed graph. An edge $(u, v)$ of $G$ is \emph{subdivided} if $(u, v)$ is replaced 
by a path from $u$ to $v$. Recall that we assume that a path is a directed path containing at least one edge.
A \emph{subdivision} $G^*$ of $G$ is a graph that can be obtained from $G$ by subdividing edges of $G$.
Note that if $G$ does not contain in-degree one, out-degree one vertices, 
then there exists a canonical mapping of vertices of $G$ to vertices of $G^*$ and of edges of $G$ to paths of $G^*$.
 
Assume that $G$ is connected. 
Then we say $G$ has an \emph{embedding} into $N$ if there exists a subdivision $G^*$ of $G$ that is a subgraph of $N$.
Now assume that $G$ has components $C_1, \ldots, C_k$. 
Then we say $G$ has an \emph{embedding} into $N$ if the components $C_i$ of $G$, for $i \in \{1, \ldots, k\}$, have embeddings into $N$
to pairwise edge-disjoint subgraphs of $N$. 
Note that these definitions imply that an embedding preserves the direction of edges of $G^*$ into $N$ and maps
a labelled vertex of $G^*$ to a vertex of $N$ with the same label.

\paragraph{Agreement embeddings.}
\pdfbookmark[2]{Agreement embeddings}{agreementEmbedding}
Let $(u, v)$ be an edge of $N$ with $u$ either a labelled vertex, i.e., the root $\rho$, or a degree-three vertex.
Consider a graph $G$ obtained from $N$ by pruning $(u, v)$ at $u$.
Then $G$ has exactly one sprout $\bar u$, and $n + 1$ labelled vertices of which $n$ are bijectively labelled by $X$ and one with $\root$. 
We can distinguish three cases. 
If $u$ is the labelled vertex $\rho$ in $N$, then $G$ contains an isolated labelled $\rho$, say, $\bar u'$.
If $u$ is a reticulation in $N$, then $G$ contains an in-degree two, out-degree zero vertex, say, $\bar u'$.
If $u$ is a inner tree vertex in $N$, then $u$ gets suppressed in the process of the pruning.
In the first two cases, we get a canonical embedding of $G$ into $N$ that is a bijection of the
edges of $G$ to the edges of $N$ and a surjection of the vertices of $G$ to the vertices of $N$.
Only $\bar u$ and $ \bar u'$ of $G$ get mapped to $u$ of $N$. 
In the third case, we obtain such an embedding for a subdivision of $G$ (which reverses the
suppression of $u$) into $N$. 
The case for pruning $(u, v)$ at $v$ is similar. 
Together the three cases motivate the following definition.

Let $G$ be a directed graph.
We say $G$ has an \emph{agreement embedding} into $N$ if there exists an embedding of $G$ into $N$
with the following properties.
\begin{itemize}
  \item An edge $(\bar u, \bar v)$ of $G$ is mapped to a path from a vertex $u$ to a vertex $v$ of $N$ such
  that $\bar u$ is mapped to $u$ and $\bar v$ is mapped to $v$.
  \item The edges of $G$ are mapped to pairwise edge-disjoint paths of $N$ that together cover all
  edges of $N$.
  \item At most two vertices of $G$ are mapped to the same vertex of $N$. In this case, one
  of these two vertices of $G$ is a sprout and the other is either a labelled isolated vertex, or an
  in-degree two, out-degree zero vertex, or an in-degree zero, out-degree two vertex.
  \item For each labelled vertex $v$ of $N$, there exists exactly one vertex $\bar v$ with
  the same label in $G$ and $\bar v$ is mapped to $v$.
\end{itemize}
Note that if $G$ has an agreement embedding into $N$, then $G$ has $n + 1$ labelled vertices of
which $n$ are bijectively labelled by $X$ and one with $\root$.
Furthermore, note that to every inner tree vertex of $N$ either a tree vertex,
or a t-sprout, or an h-sprout and an out-degree two, in-degree zero vertex of $G$ gets mapped.
The situation is similar for reticulations, leafs and the root.

\begin{lemma} \label{clm:AG:embeddingIFFpruning}
Let $G$ be a directed graph and $N \in \nets$.\\
Then $G$ has an agreement embedding into $N$ if and only if $G$ can be obtained from $N$ by a
sequence of prunings.
\end{lemma}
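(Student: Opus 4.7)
The plan is to establish both directions by induction, with the number of prunings and the number of sprouts serving as the respective induction parameters; the guiding observation is that each pruning in a sequence leaves behind exactly one sprout, so the two quantities will match up.

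For the ``only if'' direction I would induct on the length $k$ of a pruning sequence from $N$ to $G$. The base case $k=0$ is immediate, since the identity map witnesses that $N$ has an agreement embedding into itself. For the inductive step, suppose $G'$ is obtained from $N$ by $k$ prunings and carries an agreement embedding $\phi'$ into $N$, and let $G$ arise from $G'$ by pruning an edge $(u,v)$ at $u$. I would define the new embedding $\phi$ by sending the fresh sprout $\bar u$ to $\phi'(u)$, mapping the new edge $(\bar u, v)$ along the same path in $N$ that $\phi'$ assigned to $(u,v)$, and---if $u$ becomes an in-degree-one, out-degree-one vertex and is suppressed---mapping the merged edge of $G$ to the concatenation of the two paths that $\phi'$ assigned to $u$'s incident edges. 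Verifying the four properties of an agreement embedding then reduces to a straightforward case analysis on the type of $u$ (root, leaf, inner tree vertex, reticulation).

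For the ``if'' direction I would induct on the number of sprouts $k$ of $G$, showing that $G$ arises from $N$ by exactly $k$ prunings. For $k=0$, the edge-coverage, label-preservation, and ``at most two vertices mapped'' conditions collapse the embedding into an isomorphism $G \cong N$, so the empty pruning sequence works. For the inductive step, I would pick any sprout $\bar u$ of $G$, let $u$ be its image in $N$, and construct a graph $G''$ with $k-1$ sprouts that has an agreement embedding into $N$ and from which $G$ is recoverable by a single pruning. The construction splits according to whether another vertex of $G$ is mapped to $u$. If some $\bar u'$ is paired with $\bar u$ at $u$, then $G''$ is obtained by identifying $\bar u$ with $\bar u'$ into a single vertex of $G''$ that inherits $\bar u'$'s label (if any) along with the combined incident edges; pruning the resulting edge at the merged vertex recovers $G$. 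If $\bar u$ is alone at $u$, then by the structural remark following the definition, $u$ is an inner tree vertex (for a t-sprout) or a reticulation (for an h-sprout), and the edge-covering condition forces the existence of a unique edge $(\bar a, \bar b)$ of $G$ whose image path in $N$ passes through $u$; I would build $G''$ by subdividing $(\bar a, \bar b)$ at a new vertex $\bar u^{*}$, reattaching $\bar u$'s unique incident edge to terminate at $\bar u^{*}$, and deleting $\bar u$. Pruning the reattached edge at $\bar u^{*}$ then yields $G$, and the induced map is readily verified to be an agreement embedding with one fewer sprout.

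The main obstacle will be the lone-sprout subcase of the ``if'' direction: one must argue that the edge $(\bar a, \bar b)$ whose image traverses $u$ exists and is uniquely determined (using edge-disjoint coverage together with the degree balance around $u$ in $N$), and that after the surgery the four agreement-embedding conditions really do hold at $\bar u^{*}$---the right combination of in/out-degrees, direction preservation, coverage, and label preservation. Once this bookkeeping is in place, the correspondence between sprouts and prunings is transparent and both inductions close.
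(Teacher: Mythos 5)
Your proposal is correct, and it rests on the same underlying correspondence as the paper's argument: each sprout of $G$ accounts for exactly one pruning at its image vertex, with the same three-way case analysis (sprout paired with an isolated labelled vertex, paired with an in-degree two/out-degree zero or in-degree zero/out-degree two vertex, or alone at a vertex of $N$ through which some other edge-path of $G$ passes). Where you differ is in the organisation of the harder direction. The paper works forwards: it identifies, inside $N$, one pruning per sprout all at once and then applies them sequentially, which obliges it to argue that a later pruning can still be performed on the edge it has been ``extended to'' after an earlier pruning suppresses a shared vertex. Your backward induction on the number of sprouts sidesteps that bookkeeping: absorbing one sprout (merging it with its partner vertex, or subdividing the through-edge at $\bar u^{*}$ and reattaching the sprout's edge there) returns a graph $G''$ with $k-1$ sprouts that again has an agreement embedding, so the induction hypothesis applies and no interaction between prunings ever has to be tracked. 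The price is the extra verification you correctly anticipate: checking the agreement-embedding axioms for each intermediate $G''$, the degree-counting argument in the lone-sprout case showing that the image vertex is an inner tree vertex (for a t-sprout) or a reticulation (for an h-sprout) with a unique through-edge, and the base case $k=0$, where one must argue that a sprout-free agreement embedding admits no genuine subdivisions and is therefore an isomorphism. All of these go through, so your route is a clean, slightly more modular alternative to the paper's direct construction, which in exchange exhibits the pruning sequence explicitly in $N$ without auxiliary graphs.
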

\begin{proof} 
    If $G$ can be obtained from $N$ by a sequence of prunings, then an agreement embedding of $G$
    into $N$ follows naturally.
    So assume that $G$ has an agreement embedding into $N$.
    Then $G$ can be constructed from a sequence of prunings as follows.
    Assume that $G$ contains a t-sprout $\bar u$.
    If $\bar u$ is mapped to a vertex $u$ and the edge $(\bar u, \bar v)$ is mapped to the path from $u$ to $v$ in $N$,
    with $w$ the child of $u$ in this path, then prune the edge $(u, w)$ at $u$.
    This covers either of the cases of when $\bar u$ is mapped to $\rho$, 
    or to the same reticulation to which a degree two vertex of $G$ is mapped,
    or to a tree vertex of $N$ that lies on a path to which an edge of $G$ is mapped.
    In either case, applying this pruning also either creates the isolated vertex $\rho$, a degree two vertex, or suppresses a vertex, respectively.
	A pruning for an h-sprout works analogously.
	We can find such a pruning for each sprout of $G$.
	Now consider the case where we have identified that we want to prune the edge $e = (u, v)$ at $u$ and the edge $f = (u, w)$ at $w$. 
	Let $p$ be the parent of $u$. 
	If we now prune $e$ at $u$, then the edges $f$ and $(p, u)$ are removed when suppressing $u$ and a new edge $f' = (p, w)$ added. 
	In the resulting graph, we cannot prune $f$, but instead now want to prune $f'$ at $w$.
	Further note that since $G$ has an agreement embedding into $N$, no two edges have to be pruned at the same vertex.
	Hence, we can apply one pruning after the other on the edges identified in $N$ or on the edges they get extended to by preceding prunings.   
	As noted, this does not only create the sprouts, but also the labelled, isolated vertices and degree-two vertices 
	and shrinks the path of $N$ to which edges of $G$ get mapped to edges.
	Hence, this sequence of prunings results in $G$.
\end{proof}

\paragraph{Agreement graphs.}
\pdfbookmark[2]{Agreement graphs}{agreementGraph}
Recall that we assume that $N'$ has $l$ more reticulations than $N$.
Let $G$ be a directed graph with connected components $S_1, \ldots, S_k$ and $E_1, \ldots, E_l$,
such that the $E_1, \ldots, E_l$ each consist of a single directed edge.
Then $G$ is an \emph{agreement graph} for $N$ and $N'$ if 
\begin{itemize}
  \item $G$ without $E_1, \ldots, E_l$ has an agreement embedding into $N$, and
  \item $G$ has an agreement embedding into $N'$.
\end{itemize}
For such an agreement graph, we refer to each $S_i$ as an \emph{agreement subgraph} and to each
$E_j$ as a \emph{disagreement edge}.
A \emph{maximum agreement graph} $G$ for $N$ and $N'$ is an agreement graph for
$N$ and $N'$ with a minimal number of sprouts.
\Cref{fig:AD:example:singleton,fig:AD:example:disagreementEdge} give two examples of maximum agreement graphs. 
\begin{figure}[htb]
\centering
  \includegraphics{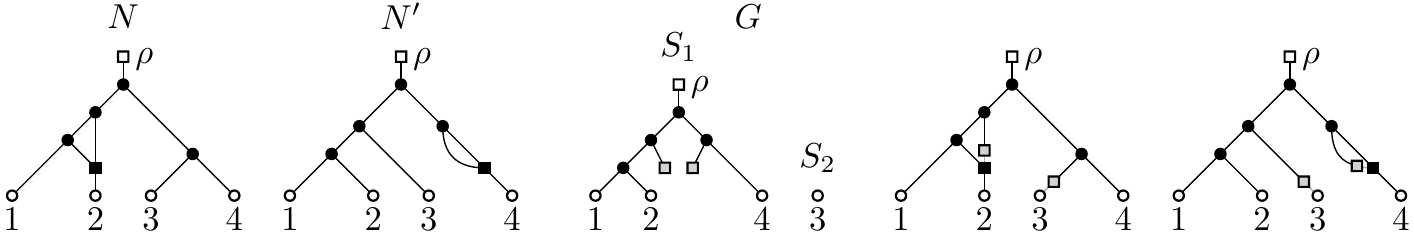} 
  \caption{A maximum agreement graph $G$ for $N$ and $N'$ with its agreement embeddings into $N$ and $N'$ shown on the
  right. Note that the agreement subgraph $S_2$ consists of a labelled isolated vertex.}
  \label{fig:AD:example:singleton}
\end{figure}
\begin{figure}[htb]
\centering
  \includegraphics{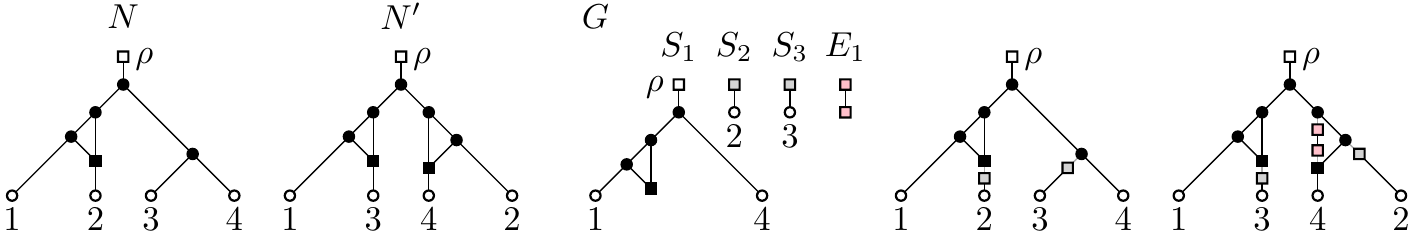}
  \caption{A maximum agreement graph $G$ for $N$ and $N'$ with its agreement embeddings into $N$ and $N'$ shown on
  the right. Note that the disagreement edge $E_1$ is only used for $N'$.}
  \label{fig:AD:example:disagreementEdge}
\end{figure}

An \emph{agreement forest} $F$ for two trees $T$ and $T'$ is an agreement graph for $T$ and $T'$ where
each agreement subgraph $S_i$ for $i \in \{2, \ldots, k\}$ is a phylogenetic tree with an unlabelled root
and $S_1$ is either a phylogenetic tree (with the root labelled $\root$) or an isolated vertex labelled $\root$.
Note that an agreement forest $F$ contains no h-sprouts and 
that thus in the respective agreement embeddings of $F$ into $T$ and $T'$ a sprout of $F$ is mapped
either to the root $\root$ or to a subdivision vertex of an edge of another agreement subgraph.
See again \cref{fig:rSPR:sequence} for an example.
On the other hand, considering shortest \PR-sequences between $N$ and $N'$ in the examples in
\cref{fig:AD:example:singleton,fig:AD:example:disagreementEdge}
shows why in general in an agreement embedding of an agreement graph $G$ a sprout may have to be mapped 
to the same vertex as a labelled isolated vertex ($\rho$ or a leaf of $N$ and $N'$) or a non-suppressible degree-two vertex of $G$. 

Before we show that maximum agreement graphs induce a metric on $\nets$, we establish further
notation and terminology to ease talking about agreement embeddings and agreement graphs.
We use $\bar e, \bar f, \bar u, \bar v$ if we refer to edges or vertices of an agreement graph and
$e, f, u, v$ for edges and vertices of $N$ or $N'$.
If we use symbols like $\bar u$ and $u$ in the same context, then $\bar u$ is usually
mapped to $u$ by the agreement embedding under consideration.

Let $G = (V_G, E_G)$ be a graph with an agreement embedding into a network $N = (V_N, E_N)$.
We say a sprout $\bar u \in V_G$ is \emph{attached} to $\bar e \in E_G$ in $N$ 
	if $\bar u$ is mapped to a vertex $u \in V_N$ that is an internal vertex of the path to which
	$\bar e$ is mapped.
Similarly, we say $\bar u \in V_G$ is \emph{attached} to $\bar x \in V_G$ in $N$ 
	if $\bar u$ and $\bar x$ are mapped to the same vertex $x \in V_N$.
We say an edge $\bar e = (\bar u, \bar v) \in E_G$ is \emph{attached} to $\bar f \in E_G$ in $N$
 	if either $\bar u$ or $\bar v$ is a sprout and attached to $\bar f$.
 	Note that $\bar e$ being attached to $\bar f$ does not imply $\bar f$ being attached to $\bar e$.
Considering the example in \cref{fig:AD:example:singleton} and the agreement embedding of $G$ into
$N$, note that one sprout is attached to the incoming edge of leaf $2$ in $N$ and another sprout is
attached to the isolated vertex labelled $3$ in $N$.

\paragraph{Embedding changes.}
\pdfbookmark[2]{Embedding changes}{niceEmbedding} 
Note that a graph $G$ may have several agreement embeddings into $N$ or $N'$.
We now describe how, in some cases, an agreement embedding can be changed into another one.
For this, let $\bar u$ and $\bar v$ be two t-sprouts of $G$ with outgoing edges $\bar e = (\bar u, \bar w)$ 
and $\bar f = (\bar v, \bar z)$, respectively, such that $\bar u$ is attached to $\bar f$ in $N$.
Let $\bar e$ be mapped to the path $P = (y, \ldots, w)$ in $N$ and let $\bar f$ be mapped to the
path $P' = (x, \ldots, y, \ldots, z)$ in $N$.
Then an \emph{embedding change} of $G$ into $N$ with respect to $\bar u$ and $\bar v$ is the change
of the embedding such that $\bar e$ is mapped to the path $(x, \ldots, y, \ldots, w)$ formed by a
subpath of $P'$ and the path $P$, and such that $\bar f$ is mapped to the subpath $(y, \ldots, z)$ of $P'$. 
See \cref{fig:AD:embeddingChange} for an example. The definition for h-sprouts is analogous.

\begin{figure}[htb]
\centering
  \includegraphics{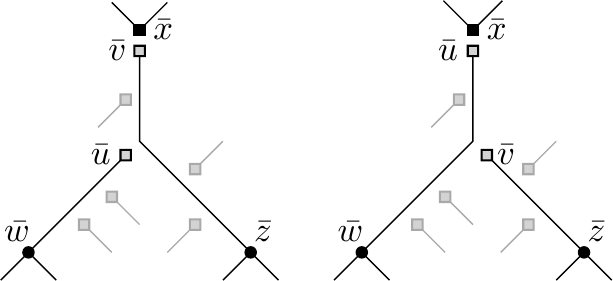}
  \caption{An embedding change with respect to $\bar u$ and $\bar v$.} 
  \label{fig:AD:embeddingChange}
\end{figure}

We now use embedding changes to show that an agreement embedding of $G$ into $N'$ can be changed
into an agreement embedding with some nice properties.
\begin{lemma} \label{clm:AD:specialEmbedding}
Let $N, N' \in \nets$ with $r$ and $r' > r$ reticulations, respectively.
Let $G$ be a maximum agreement graph for $N$ and $N'$.\\
Then there exist an agreement embedding of $G$ into $N'$ such that 
\begin{itemize}
  \item no sprout of an agreement subgraph is attached to a disagreement edge, and
  \item at least one disagreement edge is not attached to any other disagreement edge,
  and
  \item a disagreement edge $E_i$ of $G$ may only be attached to a disagreement edge $E_j$ of $G$ if $j < i$. 
\end{itemize}
\end{lemma}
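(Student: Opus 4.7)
The plan is to start from any agreement embedding $\eta$ of $G$ into $N'$ (which exists by the definition of an agreement graph) and modify it via embedding changes until the three properties all hold. I will argue via a two-level minimality. Let $\Phi_1(\eta) := \sum_{i=1}^{l} \lvert \eta(E_i) \rvert$ denote the total length of the paths to which the disagreement edges are mapped, and let $D(\eta)$ denote the \emph{attachment digraph} on the disagreement edges, with an arc $E_i \to E_j$ whenever an endpoint ($\bar a_i$ or $\bar b_i$) of $E_i$ is mapped to an internal vertex of $\eta(E_j)$. Over all agreement embeddings of $G$ into $N'$, I first minimize $\Phi_1$, and among the $\Phi_1$-minima, I then minimize the arc count of $D(\eta)$. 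Both quantities are non-negative integers, so a joint minimum $\eta^*$ exists, and I will show that $\eta^*$ satisfies all three properties.

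For property~1, suppose a sprout $\bar u$ of some agreement subgraph is attached to a disagreement edge $E_i$ in $\eta^*$. If $\bar u$ is a t-sprout, then it is attached to the outgoing edge $E_i$ of the t-sprout $\bar a_i$, so the embedding change with respect to $\bar u$ and $\bar a_i$ applies. This change strictly shortens $\eta^*(E_i)$ by the positive length $d$ of the subpath from the image of $\bar a_i$ to the image of $\bar u$, while lengthening $\bar u$'s outgoing edge (which is not a disagreement edge) by the same $d$; all other disagreement paths are unchanged, so $\Phi_1$ strictly drops, contradicting minimality. The h-sprout case is analogous, using $\bar b_i$ in place of $\bar a_i$.

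For properties~2 and~3 it suffices to show that $D(\eta^*)$ is acyclic, since relabelling the disagreement edges $E_1, \ldots, E_l$ according to a reverse topological order of $D(\eta^*)$ then yields property~3, and its sink $E_1$ satisfies property~2. The key geometric fact is that an arc $E_i \to E_j$ witnessed by $\bar a_i$ places the image of $\bar a_i$ strictly between the images of $\bar a_j$ and $\bar b_j$ on $\eta^*(E_j)$, so the image of $\bar a_j$ is a strict ancestor of the image of $\bar a_i$ in the DAG $N'$; an analogous statement holds for $\bar b_i$. Hence a cycle in $D(\eta^*)$ whose arcs are all witnessed by t-sprouts produces a strict ancestor cycle among the images of the $\bar a$'s in $N'$, which is impossible; the symmetric argument with the $\bar b$'s rules out cycles witnessed entirely by h-sprouts. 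A mixed cycle contains an arc $E_i \to E_j$ that can be reversed by the embedding change with respect to $\bar a_i, \bar a_j$ (or $\bar b_i, \bar b_j$); this change preserves $\Phi_1$, since it only swaps a subpath between $\eta^*(E_i)$ and $\eta^*(E_j)$. With a careful choice of which arc in the cycle to reverse, the change strictly decreases the arc count of $D$, contradicting the secondary minimality.

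The main obstacle is this mixed-cycle step: the embedding change that reverses one arc of a cycle might introduce new attachments elsewhere, leaving the arc count of $D$ unchanged. The delicate task is to show that, using property~1 already established together with the minimality of $\Phi_1$ and the DAG structure of $N'$, one can always pick an arc of the cycle whose reversal yields a strict net reduction. I expect this case analysis, tracking where swapped vertex images land and which internal positions remain occupied afterwards, to be the main technical burden of the proof.
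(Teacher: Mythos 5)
Your treatment of property 1 is sound: at an embedding minimising the total length $\Phi_1$ of the disagreement paths, an agreement-subgraph sprout attached to a disagreement edge would allow an embedding change that strictly shortens that disagreement path while only lengthening an agreement-subgraph edge, a contradiction. This is a legitimate (and arguably cleaner) alternative to the paper's iterative argument, which instead pushes such a sprout along successive disagreement edges and terminates because its image moves strictly closer to the root. Reducing properties 2 and 3 to acyclicity of the attachment digraph and then relabelling is also harmless, since the indices of the disagreement edges are arbitrary names.

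The gap is exactly where you flag it, and it cannot be closed with the potential you chose. The embedding change with respect to $\bar a_i$ and $\bar a_j$ (when $E_i \to E_j$) merely swaps the two sprouts: afterwards $\bar a_i$ occupies the old image of $\bar a_j$ and inherits precisely its old attachment, while $\bar a_j$ occupies the old image of $\bar a_i$, which is necessarily an internal vertex of the new path of $E_i$ (both bordering subpaths have length at least one), so the arc $E_i \to E_j$ is simply replaced by $E_j \to E_i$; moreover every sprout attached to the transferred subpath of the old path of $E_j$ is retargeted from $E_j$ to $E_i$. Hence the arc count of $D(\eta)$ is invariant under every such change, so no ``careful choice'' of which arc of a mixed cycle to reverse can strictly decrease it, and your secondary minimality gives no contradiction. (Your observation that purely t-sprout-witnessed or purely h-sprout-witnessed cycles contradict acyclicity of $N'$ is correct, but mixed cycles are precisely the case that remains.) The paper avoids a global extremal argument here: having secured property 1, it processes $E_1, \ldots, E_l$ in a fixed order and, for the current endpoint, repeatedly applies embedding changes whose termination is guaranteed because the vertex to which that sprout is mapped moves strictly closer to the root of $N'$ at each step, and which never disturb agreement-subgraph sprouts or the already-handled edges $E_m$ with $m < i$. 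If you want to keep an extremal formulation, you need a measure of this root-distance/processing-order type rather than the arc count, which your own operations preserve.
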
 
\begin{proof}
  Fix an agreement embedding of $G$ into $N'$. 
  Assume that this embedding does not fulfill the first property. 
  Then let $\bar u$ be a sprout of an agreement subgraph of $G$ that is attached to a disagreement edge $(\bar v, \bar w)$ in $N'$. 
  Without loss of generality, assume that $\bar u$ is a t-sprout. 
  Apply an embedding change with respect to $\bar u$ and $\bar v$.
  If $\bar v$ was attached to another disagreement edge $(\bar x, \bar y)$ in $N'$, then repeat this step with $\bar u$ and $\bar x$. 
  Otherwise $\bar u$ is now attached to a vertex or an edge of an agreement subgraph.
  This process terminates since the vertex $u$ to which $\bar u$ gets mapped gets closer to the root in $N'$ with every step.
  Note that the embedding change of $\bar u$ and $\bar v$ may cause a sprout $\bar z$ that was previously attached to $(\bar v, \bar w)$ in $N'$ 
  to now be attached to the edge incident to $\bar u$ in $N'$. 
  However, since this edge is an edge of an agreement subgraph, this does not negatively effect the first property.
  Therefore, every sprout that was previously attached to an edge of an agreement subgraph is still so after each step.
  Hence, each sprout $\bar u$ of an agreement subgraph can be handled after the other and without negatively affecting property three.

  Next, assume that the current embedding fulfills the first, but not the third property.
  Let $E_i = (\bar u, \bar v)$ be a disagreement edge of $G$, starting with $E_i = E_1$. 
  If $\bar u$ and $\bar v$ are each attached to a vertex or an edge of an agreement subgraph or to a disagreement edge $E_j$ with $j < i$ in $N'$,
  then proceed with $E_{i + 1}$.
  Otherwise, without loss of generality, assume that $\bar u$ is attached to a disagreement edge $E_j = (\bar x, \bar y)$ with $j > i$.
  Apply an embedding change with respect to $\bar u$ and $\bar x$.
  The same arguments as above show that eventually $\bar u$ is attached in $N'$ in a good way. 
  Since the embedding change does not affect a sprout of any $E_m$ with $m < i$ or of an agreement subgraph,
  this process does not affect the first property or the previously handled disagreement edges.
  Therefore, each $E_i$ can be handled after the other. 
  Apply analogous steps, if necessary, to $\bar v$ before proceeding with $E_{i + 1}$. 
  The process terminates after $E_i = E_l$ has been handled.
  Finally, note that the third property implies the second.
\end{proof}

Next, we show how to prune a particular edge of $G$ such that the resulting graph is still an agreement graph for $N$ and $N'$. 
\begin{lemma} \label{clm:AD:addPruning}
Let $N, N' \in \nets$. Let $G$ be an agreement graph for $N$ and $N'$. Let $\bar e = (\bar u, \bar v)$ be an edge of $G$.
Then $G$ can be transformed into a graph $G'$ such that
\begin{itemize}
  \item $\bar u$ (or $\bar v$) is a sprout in $G'$, 
  \item $G'$ contains at most one sprout more than $G$, and
  \item $G'$ is an agreement graph for $N$ and $N'$.
\end{itemize} 
\end{lemma}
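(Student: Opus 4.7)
The plan is to construct $G'$ from $G$ by performing a single pruning on the edge $\bar e$, choosing which endpoint to prune at based on the structure of $\bar u$ and $\bar v$ in $G$, and then to verify that the agreement embeddings of $G$ into $N$ and $N'$ extend to agreement embeddings of $G'$.

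I distinguish cases. If either $\bar u$ or $\bar v$ is already a sprout in $G$, take $G' = G$ so that no new sprout is introduced. Otherwise, whenever at least one endpoint (say $\bar u$) is labelled (necessarily the root $\rho$, since $\bar u$ has positive out-degree) or of degree three, I prune $\bar e$ at that endpoint: this deletes $\bar e$, introduces a new sprout $\bar u'$ together with an edge $(\bar u', \bar v)$, and suppresses the pruned-at vertex if it becomes an in-degree one, out-degree one vertex. The resulting graph $G'$ contains exactly one more sprout than $G$, namely $\bar u'$, which plays the role of $\bar u$ at the tail-end of the new edge into $\bar v$. The symmetric construction applies if only $\bar v$ is prunable.

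To verify that $G'$ is still an agreement graph, I extend the agreement embeddings. In the embedding of $G$ into $N$, the edge $\bar e$ maps to a path $P$ from $u$ (the image of $\bar u$) to $v$ (the image of $\bar v$). In the new embedding I map the edge $(\bar u', \bar v)$ to this same path $P$, sending $\bar u'$ to $u$; if $\bar u$ was suppressed in $G'$, then its two surviving incident edges in $G'$ combine into a single edge whose image in $N$ is the concatenation of the original two paths meeting at $u$. The four agreement-embedding properties carry over: the mapping remains edge-disjoint and covers $N$, labels are preserved, and the rule that at most two vertices map to a single vertex of $N$ is exactly the pairing (sprout with labelled isolated vertex, with in-degree two out-degree zero vertex, or with in-degree zero out-degree two vertex) that naturally arises at $u$. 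The embedding into $N'$ is handled identically.

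The main obstacle is the corner case in which neither $\bar u$ nor $\bar v$ is prunable or a sprout, so that $\bar u$ is of in-degree zero, out-degree two and $\bar v$ of in-degree two, out-degree zero, and thus no pruning is directly available at either endpoint of $\bar e$ in $G$. For this situation I plan to invoke \cref{clm:AG:embeddingIFFpruning}: the agreement embedding of $G$ into $N$ arises from a pruning sequence starting at $N$, and every vertex of $N$ is prunable by definition. At the moment in that sequence when the vertex $u$ (to which $\bar u$ is mapped) is still an inner tree vertex of degree three, I insert an additional pruning of the first edge of $\bar e$'s image at $u$; this yields an extended sequence whose output is a new graph $G'$ with exactly one more sprout. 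Performing the analogous insertion on the $N'$-side and reconciling the two newly introduced sprouts as a single sprout in $G'$ then completes the construction and confirms that $G'$ is an agreement graph for $N$ and $N'$ satisfying all three claimed properties.
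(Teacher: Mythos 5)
Your handling of the easy cases (an endpoint that is already a sprout, is labelled $\rho$, or has degree three) is correct and is the same as the paper's. The gap is in your ``corner case'', which is exactly where the difficulty of \cref{clm:AD:addPruning} lies. Your plan is to take the pruning sequence from $N$ provided by \cref{clm:AG:embeddingIFFpruning} and insert one extra pruning of the first edge of $\bar e$'s image at $u$ while $u$ still has degree three. But after that inserted pruning, $u$ becomes an in-degree one, out-degree one vertex and is suppressed, so the later pruning of the original sequence that is performed \emph{at} $u$ --- the very pruning that creates the in-degree zero, out-degree two vertex $\bar u$ together with the h-sprout sharing its image --- can no longer be carried out. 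Any repair (pruning the merged edge elsewhere, or dropping that pruning) yields a graph in which the vertex $\bar u$ of $G$ no longer exists and two edges of $G$ have been merged or re-attached; the output is therefore not ``$G$ with one extra sprout'', and you do not say how the remainder of the sequence, or the sprout count, is to be controlled.

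The deeper problem is that an agreement graph must be a \emph{single} graph $G'$ with agreement embeddings into both $N$ and $N'$. The collateral change around the suppressed vertex is different on the two sides: the sprout of $G$ mapped to the image of $\bar u$ in $N$ is in general a different vertex of $G$ from the sprout mapped to the image of $\bar u$ in $N'$, so the ``analogous insertion'' on the $N'$ side forces a different pair of $G$-edges to merge and produces a different graph. Reconciling only ``the two newly introduced sprouts'' does not touch this mismatch. This is precisely what the paper's proof works around: it merges the $N$-side h-sprout $\bar w$ into $\bar u$, prunes $\bar e$ there, then restores an h-sprout by pruning the resulting merged edge at its head, recursing when that head is again an unprunable degree-two vertex; termination follows because the number of degree-two vertices drops, the net sprout increase is one because each step removes a sprout before adding one, and the embedding into $N'$ is rebuilt explicitly by letting the h-sprout attached at the image of $\bar u$ in $N'$ extend along the corresponding path. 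None of this machinery is present in your proposal. A secondary point: your construction only makes \emph{some} endpoint of $\bar e$ a sprout (whichever happens to be prunable), whereas the lemma is later applied to prune $\bar e$ at a designated endpoint (indeed at both endpoints in the proof of \cref{clm:MAG:PR:lowerBound}), so the chosen-endpoint version proved in the paper is what is actually needed.
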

\begin{proof}
	We prove this for a t-sprout $\bar u$. The proof for an h-sprout works analogously. 
	If $\bar u$ is already a sprout, then there is nothing to do.
	If $\bar u$ is labelled $\root$ or $\bar u$ has degree three, then obtain $G'$ by pruning the edge $\bar e$ at $\bar u$.
	So assume that $\bar u$ is an in-degree zero, out-degree two vertex.
	Consider the agreement embedding of $G$ into $N$. 
	Let $\bar u$ be mapped to $u$ in $N$. 
	Since $u$ has degree three in $N$, there is an h-sprout $\bar w$ mapped to $u$ in $N$.
	This and the following process are illustrated in \cref{fig:AD:addPruning}.
	Then identify $\bar w$ with $\bar u$, i.e., regraft $\bar w$ to $\bar u$, and then prune $\bar e$ from $\bar u$. 
	Let $G''$ be the resulting graph.
	In the agreement embedding of the resulting graph into $N$, the new sprout $\bar u$ is now attached to an edge $\bar f = (\bar x, \bar y)$.
	To get $\bar w$ back, restart this case distinction with the goal to prune $\bar f$ at $\bar y$.
	Note that this process terminates since the number of degree two vertices in $G''$ is one less than in $G$ and thus at some point one of the first two cases has to apply.
	Let $G'$ be the resulting graph when the process has terminated.
	Then $G'$ contains the sprout $\bar u$ with incident edge $\bar e$ and contains at most one sprout more than $G$.
	That is because before the case distinction got restarted, the sprout $\bar w$ got removed first.
	Clearly, $G'$ has an agreement embedding into $N$.
	If one of the first two cases applied, then it is also easy to show that $G'$ has an agreement embedding into $N'$.
	Otherwise, note that in the agreement embedding of $G$ into $N'$ an h-sprout $\bar w'$ is attached to the degree-two vertex $\bar u$.
	For the agreement embedding of $G'$ into $N'$, this sprout $\bar w'$ extends the same way as $\bar w$ got extended in the embedding into $N$ 
	(see again \cref{fig:AD:addPruning}).
\end{proof}

\begin{figure}[htb]
  \centering
  \includegraphics{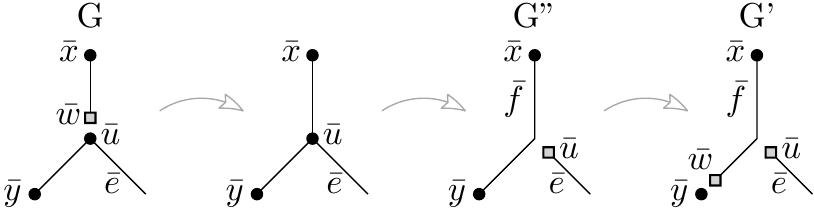}
  \caption{For the proof of \cref{clm:AD:addPruning}, 
  how to prune the edge $\bar e$ at a degree-two vertex $\bar u$.
  First, regraft $\bar w$; second, prune $\bar e$ at $\bar u$; third, reobtain the sprout $\bar w$.} 
  \label{fig:AD:addPruning}
\end{figure}

\section{Agreement distance} \label{sec:AD}
The importance of the notion of maximum agreement forests lies in the fact that it defines a metric
on $\trees$ that equals the \rSPR-distance.
In this section, we show that maximum agreement graphs similarly induce a metric on $\nets$.
Furthermore, we prove that this metric, if restricted to $\trees$, also equals the \rSPR-distance
and is thus NP-hard to compute.

Let $N, N' \in \nets$ and let $l$ be the difference in number of reticulations of $N$ and $N'$.
Let $G$ be a maximum agreement graph for $N$ and $N'$ with $l$ disagreement edges. 
Let $s$ be the total number of sprouts in the agreement subgraphs of $G$.
Then define the \emph{agreement distance $\dAD$} of $N$ and $N'$ as 
\begin{displaymath}
	\dAD(N, N') = s + l\text{.}
\end{displaymath}
This is well defined since $l$ is fixed by $N$ and $N'$, and since $s$ is 
minimal over all agreement graphs for $N$ and $N'$ by the choice of $G$.

\begin{theorem} \label{clm:AD:metric}
The agreement distance $\dAD$ on $\nets$ is a metric.
\end{theorem}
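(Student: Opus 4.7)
The plan is to verify the four metric axioms; the first three are quick and the triangle inequality requires a construction. Non-negativity is immediate from $s, l \geq 0$. For identity of indiscernibles, if $N = N'$ then taking $G = N$ yields an agreement graph with $s = l = 0$, so $\dAD(N, N) = 0$. Conversely, if $\dAD(N, N') = 0$, a maximum agreement graph $G$ has no sprouts and no disagreement edges; the agreement-embedding conditions (every edge of $N$ covered by a path, the no-sprout regime, and label preservation) then force the embeddings $G \to N$ and $G \to N'$ to be label-preserving graph isomorphisms, so $N = N'$. Symmetry holds because the definition of an agreement graph for the pair $\{N, N'\}$ depends only on the unordered pair: the role of ``the network with more reticulations'' is determined by the pair, not an ordering, so both the number of disagreement edges $l$ and the minimal sprout count $s$ are invariants of $\{N, N'\}$.

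For the triangle inequality $\dAD(N_1, N_3) \leq \dAD(N_1, N_2) + \dAD(N_2, N_3)$, let $G_{12}$ and $G_{23}$ be maximum agreement graphs for $(N_1, N_2)$ and $(N_2, N_3)$, fixing agreement embeddings into $N_2$ that are as nice as \cref{clm:AD:specialEmbedding} permits. By \cref{clm:AG:embeddingIFFpruning} each of $G_{12}$ and $G_{23}$ (or the appropriate disagreement-edge-free subgraph) arises from $N_2$ by a sequence of prunings. The plan is to apply both sequences of prunings to $N_2$ to obtain a single graph $H$, adding the second batch one pruning at a time via \cref{clm:AD:addPruning} so that each added pruning contributes at most one extra sprout. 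The embedding of $H$ into $N_1$ is inherited from $G_{12}$ by carrying each extra cut through the $G_{12}$-correspondence between $N_2$ and $N_1$; the embedding into $N_3$ is obtained analogously via $G_{23}$.

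It then remains to reorganise $H$ as an agreement graph for $(N_1, N_3)$. Its components split into agreement subgraphs (embedding into both $N_1$ and $N_3$) and disagreement edges; the number of disagreement edges is forced by $l_{13} = \lvert r_3 - r_1 \rvert$, which satisfies $l_{13} \leq l_{12} + l_{23}$. A careful sprout-accounting, distinguishing sprouts used as endpoints of disagreement edges from sprouts of agreement subgraphs and exploiting the pairing supplied by \cref{clm:AD:specialEmbedding}, should yield $s_H + l_{13} \leq (s_{12} + l_{12}) + (s_{23} + l_{23})$, which gives the desired inequality.

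The main obstacle is this final counting, and it is most delicate when $r_2$ does not lie between $r_1$ and $r_3$: in that non-monotonic case some cuts on $N_2$ correspond to reticulation structure present only in $N_2$, and the naive combined graph may contain components that fail to embed into one of $N_1, N_3$. I expect these problematic components can be consolidated via additional applications of \cref{clm:AD:addPruning} and embedding changes of the kind used in the proof of \cref{clm:AD:specialEmbedding}, so that the final graph is a valid agreement graph for $(N_1, N_3)$ with the sprout and disagreement-edge count within the stated budget.
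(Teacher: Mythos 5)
Your handling of non-negativity, symmetry, and the identity axiom is fine and matches the paper, and your Case-I-style plan for the triangle inequality (apply the prunings of both maximum agreement graphs to the middle network, adding the second batch one at a time via \cref{clm:AD:addPruning} so each costs at most one sprout, then transfer the embeddings) is essentially the paper's argument when the reticulation numbers are monotone along $N_1, N_2, N_3$. But the place you flag as "the main obstacle" -- the non-monotone case, in particular $r_2 > \max(r_1, r_3)$ -- is a genuine gap, not a detail: there the paper does not get by with further applications of \cref{clm:AD:addPruning} and embedding changes alone. Its Case II.b introduces a different construction: fix embeddings of both agreement graphs into $N_2$, colour the edges covered by disagreement edges of $G_{12}$ and $G_{23}$ (orange/green), use embedding changes to align the two families so that each disagreement edge lies on all-orange, all-green, or all doubly-coloured edges, delete the doubly-coloured edges (these belong to neither $N_1$ nor $N_3$), and add a further "red" colouring rule to delete edges that would otherwise survive as sprouts of the core graph $S$. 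The unmatched disagreement edges of $G_{23}$ are then replaced by $k \le l_{23}$ extra components, each a single directed edge contributing \emph{two} sprouts, and these components must be embedded so as to cover not only the green edges but also the red ones. The inequality then rests on the specific accounting $s + l = 2k + l \le 2 l_{23} + (l_{12} - l_{23}) = l_{12} + l_{23}$, i.e., the factor-two cost of each replacement component is exactly absorbed by the budget. Your sketch neither produces these replacement components nor explains why the graph obtained after deleting the "reticulation structure present only in $N_2$" still covers all edges of $N_1$ and $N_3$ (the red-edge issue), nor gives the counting that keeps the total within $\dAD(N_1,N_2)+\dAD(N_2,N_3)$.

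A second, smaller gap: even in the monotone case you assert that the embedding of the merged graph into $N_3$ "is obtained analogously via $G_{23}$", but a pruning imported from $G_{12}$ need not land at a vertex that is meaningful for the embedding into $N_3$; the paper needs the explicit "reach beyond" argument (a sprout whose incident edge is mapped to a path passing strictly beyond the intended attachment vertex, resolved either by an embedding change or by letting a partner sprout of $G_{23}$ reach beyond in $N_3$). Without something of this kind, the claim that the merged graph is an agreement graph for $(N_1, N_3)$ is not yet established even when $r_1 \le r_2 \le r_3$.
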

  \begin{proof}
    We have to show that $\dAD$ is symmetric, non-negative, that for all $M, M' \in \nets$ $\dAD(M, M') = 0$ if and only if $M = M'$, 
    and that $\dAD$ satisfies the triangle inequality. Let $N$, $N'$, and $l$ be as above.
    First note that the agreement distance is symmetric and non-negative by definition.
    Second, if  $N = N'$, then $G = N$ is a maximum agreement graph for $N$ and $N'$ with zero    
    sprouts and zero disagreement edges and thus $\dAD(N, N) = 0$.
    Now let $G$ be a maximum agreement graph for $N$ and $N'$ with zero sprouts and zero disagreement edges, i.e., $\dAD(N, N') = 0$.
    Together with the fact that $N$ and $N'$ are internally binary, this implies that every
    unlabelled vertex of $N$ and $N'$, respectively, gets covered by a degree three vertex of $G$.
    Thus $G$ has to consist of a single connected component and has an agreement embedding into
    both $N$ and $N'$ without subdivisions. This in turn implies that $N = G = N'$.
    
    Next, we prove that the agreement distance satisfies the triangle inequality.
    For this let $N, N', N'' \in \nets$ with $r, r'$ and $r''$ reticulations, respectively. 
    With out loss of generality, assume that $r \leq r''$.
    Let $G'$ (resp. $G''$) be a maximum agreement graph for $N$ and $N'$ (resp. $N'$ and $N''$) with 
    $s'$ sprouts in its agreement subgraphs and $l'$ disagreement edges (resp. $s''$ and $l''$).
    For the triangle inequality to hold, we have to show that 
    \begin{displaymath}
    	\dAD(N, N'') \leq d = \dAD(N, N') + \dAD(N', N'') = s' + s'' + l' + l''\text{.} 
    \end{displaymath}
    For this, we construct an agreement graph $G$ for $N$ and $N''$ with $s$ sprouts in its
    agreement subgraphs  and $l$ disagreement edges such that $s + l \leq d$. 
    Note that $G$ does not have to be a maximum agreement graph. 
    Also note that $l$ is fixed by $N$ and $N''$.
    The main idea for the construction of $G$ is to merge $G'$ and $G''$ in terms of the prunings
    they represent in $N$, $N'$ and $N''$.
    Containing, so to say, sprouts from both $G'$ and $G''$ and the right amount of disagreement edges, 
    finding agreement embeddings of $G$ into $N$ and $N''$ will become easy. 
    We first consider the restricted cases of when $N$, $N'$ and $N''$ either have
    the same number of reticulations or only differ in the number of reticulations.

    \noindent\textbf{Case I -- $l' = l'' = 0$.}     
    In this case, by \cref{clm:AG:embeddingIFFpruning} both $G'$ and $G''$ can be obtained from $N'$ by applying $s'$ and $s''$ prunings, respectively.
    We now apply all these prunings to $N'$ to construct $G$ in the following way.
    Like in \cref{clm:AG:embeddingIFFpruning}, we identify to which edges of $N'$ this prunings correspond 
    and whether they prune at the tail or the head of the edge.
    Apply the $s'$ prunings of $G'$ to $N'$ to obtain, of course, $G'$.
    Next, to apply the $s''$ prunings (in $N'$) of $G''$ to $G'$, we have to identify which edges to prune in $G'$.
    
    Assume, without loss of generality, that we want to prune $e = (u, v)$ at $u$ in $N'$. 
   	Further assume $G'$ contains an edge $\bar e = (\bar u, \bar y)$ such that $\bar u$ is mapped to $u$ 
   		and $\bar e$ to a path containing $e$.
	With \cref{clm:AD:addPruning} prune $\bar e$ at $\bar u$ and obtain a graph $\bar G$.
	Note that $\bar G$ has an agreement embedding into $N$ and $N'$. 
	Next, assume $G'$ contains an edge $\bar e' = (\bar x, \bar y)$ such that
		$\bar e'$ is mapped to a path containing $e$ and $u$ as internal vertex.  	
	Then, $G'$ contains a sprout $\bar w$ that is mapped to $u$ (and thus attached to $\bar e'$ in $N'$).
	If $\bar w$ is an h-sprout, prune $\bar e'$ at $\bar x$ with \cref{clm:AD:addPruning} and obtain a graph $\bar G$.
	Note that $\bar G$ has an agreement embedding into $N$ and $N'$. 
	So assume otherwise, namely that $\bar w$ is a t-sprout. 
	Let $\bar w$ have the incident edge $(\bar w, \bar z)$.
	Subdivide $\bar e'$ with a new vertex $\bar u$ and identify $\bar w$ with $\bar u$.
	Prune $\bar e = (\bar u, \bar y)$ at $\bar u$ and 
	then use \cref{clm:AD:addPruning} to prune $(\bar x, \bar z)$ at $\bar x$ to reobtain $\bar w$.
	Let $\bar G$ be the resulting graph.
	Note that $\bar G$ has an agreement embedding into $N'$.
	Furthermore, apply an embedding change with respect to $\bar u$ and $\bar w$ 
	to see that $G$ still has an agreement embedding into $N$.
	Repeat this process (now using $\bar G$ instead of $G'$) for each of the $s''$ sprouts of $G''$.
	Let $G$ be the resulting graph, which by construction has an agreement embedding into $N'$ and $N$.  
	Furthermore, $G$ has at most $s \leq s' + s''$ sprouts.
	
	Lastly, we have to show that $G$ has an agreement embedding into $N''$.
	Consider the agreement embeddings of $G$ and $G''$ into $N'$.
	Let $\bar u$ be a sprout of $G$ obtained for a sprout $\bar u''$ of $G''$.
	If $\bar u$ and $\bar u''$ are mapped to the same vertex $u$ of $N''$,
	then it is straightforward to handle $\bar u$ when obtaining the agreement embedding of $G$ into $N''$.
	On the other hand, $\bar u$ could ``reach beyond'' $u$, that is, its incident edge is mapped to a path containing $u$ as internal vertex.
	This case might be reduced to the former with an embedding change of $G$ into $N'$.
	Otherwise, we know that $\bar u''$ is attached to a degree two vertex $\bar x''$ in $N'$.
	Furthermore, there is then also a sprout $\bar w''$ of $G''$ that is attached to $\bar x$ in the agreement embedding of $G''$ into $N''$.
	Let $\bar w$ be the sprout of $G$ obtained for the sprout $\bar w''$.
	Using the agreement embedding of $G''$ into $N''$ to obtain the agreement embedding of $G$ into $N''$, 
	we then let the sprout $\bar w$ ``reach beyond'' $\bar x''$ in the same way as $\bar u$ does in the agreement embedding of $G$ into $N'$
	(see also \cref{fig:AD:addPruning}).
	To conclude, note that with 
	\begin{displaymath}
	s + l = s \leq s' + s'' = s' + s'' + l' + l''
	\end{displaymath}
	the triangle inequality holds in this case.
    
    \noindent\textbf{Case II.a -- $s' = s'' = 0$ and $r < r' < r''$.}
	In this case, $N'$ can be seen as $N$ plus $l'$ reticulation edges and $N''$ can be seen as $N'$
	plus $l''$ reticulation edges. Thus, $N''$ can also be seen as $N$ plus $l' + l''$ reticulation
	edges. Therefore $G$ consisting of $N$ and $l = l' + l''$ disagreement edges is a desired
	agreement graph for $N$ and $N''$ showing that the triangle inequality holds in this case.
	
    \noindent\textbf{Case II.b -- $s' = s'' = 0$ and $r < r' > r''$.}
    Fix agreement embeddings of $G'$ and $G''$ into $N'$. 
	Colour all edges to which a disagreement edge of $G'$ is mapped orange and to which a disagreement edge of $G''$ is mapped green.
	Intuitively, edges that are now both green and orange in $N'$ are neither in $N$ nor in $N''$. 
	We now align the agreement embeddings of $G'$ (and $G''$) such that a disagreement edge is mapped to either edges
	that are all orange or all green-orange (resp. all green or all green-orange). 
	Note that a disagreement edge is mapped to a path that starts at a tree vertex and ends at a reticulation.
	Furthermore, if such a path contains an internal vertex $v$, then the sprout of another disagreement edge is mapped to $v$. 
	Therefore, to align the agreement embeddings as described above, we can apply a sequence of simple embedding changes to the sprouts of disagreement edges
	as illustrated in \cref{fig:AD:metric:caseIIbAEchange} (i) and (ii) (the rules for h-sprouts and swapped colours are analogous).
	We can further align those disagreement edges of $G'$ and $G''$ that are mapped to green-orange edges with rule (iii) in \cref{fig:AD:metric:caseIIbAEchange}.
	Now let $k'$ be the number disagreement edges of $G'$ (and thus also of $G''$) that are mapped to green-orange edges. 
	   
    \begin{figure}[htb]
	  \centering  
	  \includegraphics{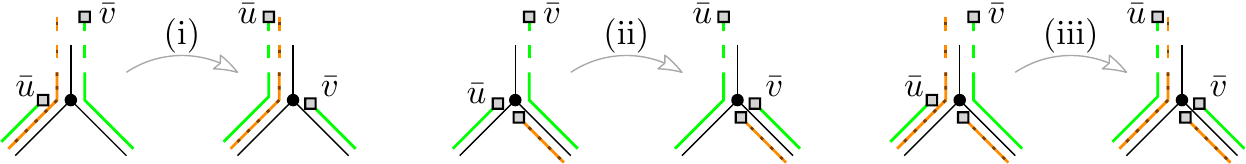}
	  \caption{For Case II.b, embedding changes of $G''$ (green) into $N'$ with respect to $\bar u$ and $\bar v$ 
	  to align the embeddings of disagreement edges of $G'$ (orange with dots) and $G''$ in $N'$.}  
	  \label{fig:AD:metric:caseIIbAEchange}
	\end{figure} 
	
	Obtain a new $N'$ from $N'$ by removing all green-orange edges from $N'$, obtain new $G'$ and $G''$ from $G'$ and $G''$ by removing $k'$ disagreement edges.
	Note that $G''$ has now $k = l'' - k'$ disagreement edges. 
	Clearly, $G'$ (resp. $G''$) has still an agreement embedding into $N$ and $N'$ (resp. $N'$ and $N''$). 
	Then, in $N'$, if a vertex is incident to an uncoloured edge $e$, an orange edge, and a green edge,
     then colour $e$ red. Such a colouring is illustrated in \cref{fig:AD:metric:caseIIb}.
    Next and as long as possible, while a vertex is incident to an uncoloured edge $e$, a red edge and a green or orange edge, colour $e$ red.
    Obtain $S$ from $N'$ by removing all coloured edges and suppressing in-degree one, out-degree one vertices.
    Removing the red edges prevents $S$ from having sprouts.
    Let $G$ be the graph consisting of $S$ and $l$ disagreement edges  
    and $k = l'' - k'$ connected components $F_i$ consisting of a single directed edge.
 	We claim that $G$ is an agreement graph for $N$ and $N''$. 
    
 	\begin{figure}[htb]
	  \centering
	  \includegraphics{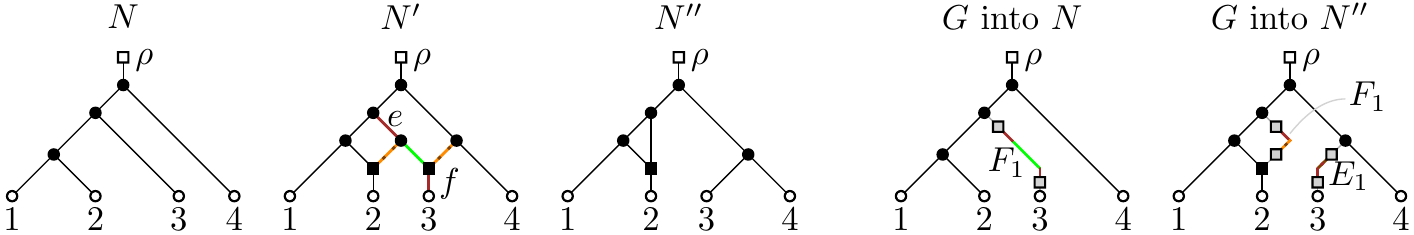}
	  \caption{For Case II.b, $N$ (resp. $N''$) can be obtained from $N'$ by removing the orange (with dots) (resp. green) edges.
	  Embedding $G$ into $N$, the agreement subgraph $F_1$ has to cover not only the green edge, but also the red edges ($e$ and $f$), 
	  which got removed from $N'$ when obtaining $G$ because a disagreement edge of both $N$ and of $N''$ were incident to them.} 
	  \label{fig:AD:metric:caseIIb}
	\end{figure} 
	
 	We construct an agreement embedding of $G$ into $N$.
 	The embedding of $S$ into $N$ is given by the embeddings of $S$ and $N$ into $N'$.
	Let $E_i$ be a disagreement edge of $G''$. 
	Let $P$ be the green path in $N'$ that corresponds to $E_i$. 
	If an edge of $P$ caused the creation of a red edge $e$, extend $P$ by $e$ if possible, i.e., if $P$ would still be a directed path. 
	Next and as long as possible, if $e$ caused another red edge $e'$, extend $P$ by $e'$ if possible.
	Then embed an $F_i$ into $N$ in the way that $P$ is embedded onto $N$ in the embedding of $N$ into $N'$.
	The colours of the edges ensure that this is possible. See again \cref{fig:AD:metric:caseIIb} for an example.
	Furthermore, note that this construction eventually covers all green and red edges.
	Hence, we constructed an agreement embedding of $G$ into $N$.
	Finding an agreement embedding of $G$ into $N''$ works analogously but also uses the disagreement edges of $G$ besides the $F_i$. 
 	Since $l = l' - l''$, we get 
 	\begin{displaymath}
	 	s + l = 2k + l \leq 2l'' + l = l' + l''\text{,} 
 	\end{displaymath}
 	and thus the triangle	inequality also holds in this case.

 	\noindent\textbf{Case II.c -- $s' = s'' = 0$ and $r > r' < r''$.}
	In this case, $N$ and $N''$ can be obtained from $N'$ by adding	$l'$ and $l'' = l + l'$	reticulation edges, respectively.
	Consequently, $N'$ together with $l$ disagreement edges and $l'$ further
	connected components that consists of a single directed edge gives an agreement graph for $N$ and	$N''$. 
	Since $l = l'' - l'$, we get $s + l = 2l' + l = l' + l''$, and thus the triangle inequality	also holds in this case.

	\noindent\textbf{Case III.a -- $r \leq r' \leq r''$.}
	Assume agreement embeddings of $G'$ and $G''$ with nice properties as in \cref{clm:AD:specialEmbedding}.
	We now combine Case I and Case II.b to obtain $G$.
	Let $H$ be the graph $G''$ without its disagreement edges.
	Note that $H$ has an agreement embedding into $N'$ and has $s''$ sprouts.
	Like in Case I, obtain a graph $R$ from $H$ by applying $s'$ prunings in the
	way the $s'$ sprouts of $G'$ are attached to vertices in $N'$. 
	Note that $R$ has an agreement embedding into $N'$ and has at most $s' + s''$ sprouts.
	Then like in Case II.b, obtain a graph $S$ from $R$ by removing all paths from $R$ 
	to which disagreement edges of $G'$ are mapped.
	Again, handle conflicts between a sprout of a disagreement edge of $G'$ and a sprout of $R$	like the red edges in Case II.b.
	Now let $G$ be the graph consisting of $S$ and $l = l' + l''$ disagreement edges.
	Note that $S$ and thus $G$ have at most $s' + s''$ sprouts (ignoring those in the disagreement edges). Hence, $s + l \leq d$.
	Constructing agreement embeddings of $G$ works again by combining the mechanisms from Case I and Case II.b. 
	
	The two cases for when $r \leq r' \geq r''$ and $r \geq r' \leq r''$ can be handled similarly to
	Case III.a together with the ideas from Case II.b and Case II.c. 
	We give a brief outline of how $G$ can be constructed.
	
	\noindent\textbf{Case III.b -- $r \leq r' \geq r''$.}
	Let $S$ be the graph obtained from $N'$ by removing all paths to which the disagreement edges of
	$G'$ and $G''$ are mapped (like in Case II.b) 
	and by applying the prunings of $G'$ and $G''$ in the way they embed into $N'$ (like in Case I).
	Again, in this process we have to take care of cases where two sprouts are mapped to the same vertex. 
	Then the graph $G$ consisting of $S$ and $k \leq l''$ additional directed edges and $l$	disagreement edges 
	is an agreement graph for $N$ and $N''$ with at most $s' + s'' + 2l''$ sprouts
	in agreement subgraphs	and $l = l' - l''$ disagreement edges. 
	Hence, $s + l \leq d$.

	\noindent\textbf{Case III.c -- $r \geq r' \leq r''$.}
	Let $S$ be the graph obtained from $N'$ by applying the prunings of $G'$ and $G''$ in the way they embed into $N'$ (like in Case I). 
	Then the graph $G$ consisting of $S$ and $l'$ additional directed edges and $l$ disagreement edges 
	is an agreement graph for $N$ and $N''$ with at most $s' + s'' + 2l'$ sprouts in agreement subgraphs and $l = l'' - l'$ disagreement edges. 
	Hence, $s + l \leq d$.
	
	This concludes the proof.
  \end{proof}

Next, we show that if we restrict the agreement distance to the space of phylogenetic trees, then it
equals the \rSPR-distance. 

\begin{proposition}\label{clm:AD:treeRestriction}
The agreement distance on $\trees$ is equivalent to the \rSPR-distance.
\end{proposition}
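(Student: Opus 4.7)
The plan is to reduce to the classical Bordewich--Semple characterisation $d_{\rSPR}(T, T') = m(T, T') - 1$, where $m(T, T')$ is the minimum number of components of a classical agreement forest for $T$ and $T'$~\citep{BS05}. Since both $T$ and $T'$ have zero reticulations, $l = 0$ in the definition of $\dAD$, and hence $\dAD(T, T')$ equals the minimum sprout count over all agreement graphs for $T$ and $T'$. I would establish the two inequalities $\dAD \leq d_{\rSPR}$ and $d_{\rSPR} \leq \dAD$ separately.

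For $\dAD(T, T') \leq d_{\rSPR}(T, T')$, I would start from a shortest \rSPR-sequence of length $k$ between $T$ and $T'$. Each \rSPR step is a tail \PRZ (no reticulation is available to prune at the head), and so decomposes into a tail pruning and a subsequent regraft. Applying the $k$ prunings in order to $T$ yields the classical agreement forest $F$ with $k + 1$ components. By \cref{clm:AG:embeddingIFFpruning}, $F$ has an agreement embedding into $T$, and a symmetric argument starting from $T'$ and running the sequence in reverse produces an agreement embedding into $T'$. Since $F$ has exactly $k$ t-sprouts and no disagreement edges, it is a valid agreement graph for $T$ and $T'$, witnessing $\dAD(T, T') \leq k$.

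For $d_{\rSPR}(T, T') \leq \dAD(T, T')$, let $G$ be a maximum agreement graph with $s$ sprouts. Each component of $G$ embeds as a connected subgraph of the tree $T$ and hence is itself a directed tree. The agreement-embedding axioms applied to a tree target leave only a short list of admissible sprout pairings: an h-sprout must be paired either with an isolated labelled leaf or with an unlabelled in-degree zero, out-degree two vertex sitting at an inner tree vertex of $T$. The key step is to argue that every such h-sprout pairing can be removed by a local rearrangement of $G$ that preserves the agreement embeddings into both $T$ and $T'$ without increasing the total sprout count; intuitively, a head-pruning of an edge $(u, v)$ is replaced by a tail-pruning of the same edge (or of an appropriate ancestor edge), and \cref{clm:AD:specialEmbedding} together with \cref{clm:AD:addPruning} supplies the tools to adjust both embeddings coherently. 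Iterating yields an agreement graph in which every non-$\root$ component is a phylogenetic tree rooted at a single t-sprout, i.e., a classical agreement forest with $s + 1$ components, and the Bordewich--Semple characterisation then gives $d_{\rSPR}(T, T') \leq s$.

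The main obstacle is this structural step in the lower bound: verifying case by case that every h-sprout pairing can be eliminated while simultaneously maintaining the two embeddings into $T$ and $T'$ and not creating extra sprouts. The upper bound is by contrast a routine translation between \rSPR-sequences and the pruning-based framework of \cref{sec:AG}.
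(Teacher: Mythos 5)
Your overall skeleton is the same as the paper's: reduce to the Bordewich--Semple characterisation by turning a maximum agreement graph for two trees into a classical agreement forest, the crux being the elimination of h-sprouts without increasing the sprout count. However, that crux is precisely what you leave open (``the main obstacle''), and the tools you point to will not supply it. \cref{clm:AD:specialEmbedding} is about disagreement edges and assumes $r' > r$; for two trees $l = 0$, so it says nothing here. \cref{clm:AD:addPruning} creates up to one \emph{additional} sprout, which is the wrong direction when maximality of $G$ forces you not to increase the count. Moreover, your intended local move -- replace the head-pruning of $(u,v)$ by a tail-pruning of the same or an ancestor edge, one h-sprout at a time, preserving both embeddings -- runs into a real difficulty: the vertex of $G$ that an h-sprout is paired with (an isolated labelled leaf or an in-degree zero, out-degree two vertex) can be \emph{different} in the embedding into $T$ than in the embedding into $T'$, so a single local rewiring chosen to fit one embedding need not fit the other.

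The paper's argument resolves exactly this. First, an h-sprout whose neighbour is a degree-two vertex is handled by deleting its edge, which turns that degree-two vertex into a t-sprout (count unchanged). For the remaining $k$ h-sprouts one observes that, since $T$ and $T'$ have no reticulations, each embedding pairs the h-sprouts bijectively with vertices of $G$ that have in-degree zero and out-degree zero (labelled) or two; hence $G$ itself contains exactly $k$ such vertices, forming a set $M$ independent of the embedding. One then removes \emph{all} $k$ h-sprout edges simultaneously (suppressing resulting degree-two vertices) and adds $k$ new edges, each from a fresh t-sprout into a vertex of $M$; in each of the two embeddings separately, the new t-sprout edges are matched to the paths freed by the deleted h-sprout edges according to that embedding's own pairing. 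This global swap is what makes the construction compatible with both embeddings at once, and it never increases the sprout count (it decreases it if an h-sprout was adjacent to a t-sprout). Without this idea -- the counting argument identifying $M$ and the simultaneous, per-embedding re-matching -- your lower-bound direction is not established. (Your upper-bound direction is fine in spirit, though ``apply the $k$ prunings in order to $T$'' needs the usual induction, since the prunings of an \rSPR-sequence are performed on intermediate trees; alternatively one can simply note that any Bordewich--Semple agreement forest, with unlabelled roots viewed as t-sprouts, is an agreement graph.)
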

  \begin{proof}
    Let $T, T' \in \trees$. 
    Let $G$ be a maximum agreement graph for $T$ and $T'$ with components $S_1, \ldots, S_m$.
    We distinguish whether $G$ contains an h-sprout or not.
    
    Assume $G$ does not contain an h-sprout. 
    Then $G$ is a maximum agreement forest for $T$ and $T'$.
    Therefore, $\dAD(T, T') = m - 1$, that is, it equals the number of components of $G$ minus one.
    Furthermore, by removing sprouts and their incident edges from $G$ we obtain a forest $F$ 
    that is a maximum agreement forest for $T$ and $T'$ under the definition of \citet{BS05}.
    Hence, the statement follows from Theorem 2.1 by \citet{BS05}.

 	\begin{figure}[htb]
	  \centering
	  \includegraphics{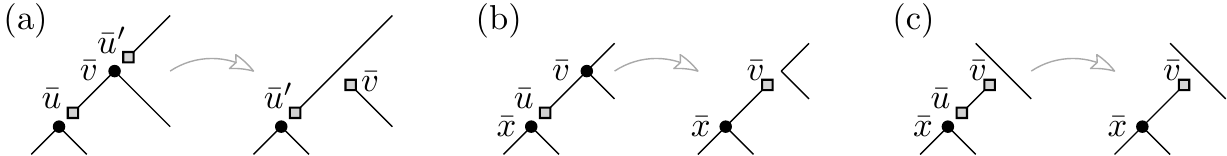}
	  \caption{How to convert h-sprouts from a maximum agreement graph $G$ for two trees to t-sprouts for \cref{clm:AD:treeRestriction},
	  when the h-sprout $\bar u$ is child of a degree two vertex (a), a degree-three vertex (b), or a t-sprout (c), respectively.} 
	  \label{fig:AD:metric:eqSPR}
	\end{figure} 

	Now assume $G$ contains $k$ h-sprouts.
	We now show how to derive a maximum agreement graph $G'$ for $T$ and $T'$ without h-sprouts.
	Assume that $G$ contains an h-sprout $\bar u$ that is a child of a degree two vertex $\bar v$. 
	Note that in the agreement embedding of $G$ into $T$ and $T'$ there is another h-sprout attached to $\bar v$. 
	Thus, deleting $(\bar u, \bar v)$ from $G$ creates a new t-sprout $\bar v$
	such that $G$ is still a maximum agreement graph for $T$ and $T'$  (see \cref{fig:AD:metric:eqSPR} (a)). 
	So assume that $G$ contains no such h-sprout. 
	Hence, $G$ contains $k$ h-sprouts that are adjacent to degree three vertices, to $\rho$ or a t-sprout.
	Then since a tree does not contain reticulations, note that $G$ also contains $k$
	vertices with in-degree zero but out-degree either zero (a labelled leaf of $T$) or two.
	That is because in the agreement embedding of $G$ into $T$ and $T'$ the $k$ h-sprouts have to get mapped to such $k$ vertices. 
	Let $M$ be the set of those vertices. 
	Now, firstly, remove from $G$ the $k$ h-sprouts and their incident edges and suppress resulting	degree two vertices.
	If this results in an unlabelled, isolated vertex, remove it too.
	This does not create any new sprouts since by assumption no h-sprout was incident to a degree two vertex.
	Secondly, add $k$ edges connecting each vertex in $M$ with a new t-sprout (see \cref{fig:AD:metric:eqSPR} (b) and (c)).
	Let $G'$ be the resulting graph. 
	Note that $G'$ contains either the same number of sprouts as $G$ or less if an h-sprout was adjacent to a t-sprout in $G$. 
	(Note that $G$ was actually not a maximum agreement graph if the latter case applies.)
	\Cref{fig:AD:metric:eqSPR} also shows how to derive the agreement embeddings of $G'$ into $T$ and $T'$ from the agreement embeddings of $G$. 
	Hence $G'$ is a maximum agreement graph for $T$ and $T'$ without h-sprouts and the claim follows from the previous case.
  \end{proof}

\citet[Theorem 2.2]{BS05} have shown that computing the \rSPR-distance of two phylogenetic trees is NP-hard. 
Together with \cref{clm:AD:treeRestriction} this implies the following corollary.

\begin{corollary}\label{clm:MAF:NPhard}
Computing the agreement distance is NP-hard.
\end{corollary}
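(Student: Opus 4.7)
The plan is to prove this as a direct polynomial-time reduction from the problem of computing the \rSPR-distance between two phylogenetic trees, which is already known to be NP-hard by \citet[Theorem 2.2]{BS05}.

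The reduction is essentially the identity map. Given an instance of the \rSPR-distance problem, i.e.\ two phylogenetic trees $T, T' \in \trees$ and an integer $k$, I would ask whether $\dAD(T, T') \leq k$ for the same pair, viewed as elements of $\nets$ (since every phylogenetic tree is a phylogenetic network with zero reticulations). By \cref{clm:AD:treeRestriction}, $\dAD(T, T')$ equals the \rSPR-distance between $T$ and $T'$, so the two decision problems have identical answers on this instance. The reduction requires only copying the input, so it is clearly polynomial-time computable.

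Hence any polynomial-time algorithm for computing $\dAD$ on $\nets$ would, in particular, compute the \rSPR-distance on $\trees$ in polynomial time, contradicting \citet[Theorem 2.2]{BS05}. Therefore computing the agreement distance is NP-hard. There is no real obstacle here: the work has already been done in \cref{clm:AD:treeRestriction}, and this corollary is simply the bookkeeping observation that hardness is inherited by a superproblem whose restriction to a polynomial-time identifiable subclass of inputs coincides with a known NP-hard problem.
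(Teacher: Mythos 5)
Your proposal is correct and is exactly the paper's argument: the corollary follows by combining the NP-hardness of the \rSPR-distance \citep[Theorem 2.2]{BS05} with \cref{clm:AD:treeRestriction}, which shows the agreement distance restricted to $\trees$ coincides with the \rSPR-distance. You merely spell out the trivial identity reduction more explicitly than the paper does.
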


\section{Bounds on rearrangement distances} \label{sec:bound}
After we have shown that the agreement distance equals the \rSPR-distance on \trees, we now consider its relation to the \PR- and \SNPR-distance on \nets.
We start on a positive note concerning the neighbourhoods of a phylogenetic network under \PR and the agreement distance.
\begin{lemma} \label{clm:AD:neighbourhood}
Let $N, N' \in \nets$. Then $\dAD(N, N') = 1$ if and only if $\dPR(N, N') = 1$.
\end{lemma}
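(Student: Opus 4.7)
The plan is to prove the two directions separately, using \cref{clm:AG:embeddingIFFpruning} to translate between agreement embeddings and sequences of prunings, and then matching a single pruning/regraft with a single \PRZ, and an isolated disagreement edge with a single \PRP or \PRM.

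For the forward direction, suppose $\dPR(N, N') = 1$ and split into three cases by the type of operation. If $N'$ is obtained from $N$ by a \PRZ pruning an edge $e = (u, v)$ at $u$ and regrafting to subdivide some edge $f$, let $G$ be the graph obtained from $N$ by a single pruning of $e$ at $u$. Then $G$ has exactly one sprout and no disagreement edges, and by \cref{clm:AG:embeddingIFFpruning} $G$ has an agreement embedding into $N$. The regraft step can be read backwards as a pruning of $N'$ yielding the same graph $G$, so $G$ also has an agreement embedding into $N'$; the head-pruning case is symmetric. Hence $G$ is an agreement graph for $N$ and $N'$ with $s \leq 1$ and $l = 0$, which gives $\dAD(N, N') \leq 1$. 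If $N'$ is obtained from $N$ by a \PRP, then $l = 1$, and letting $G$ consist of $N$ together with one additional isolated directed edge $E_1$ produces an agreement graph: $G \setminus \{E_1\} = N$ embeds into $N$ trivially, while $G$ embeds into $N'$ by mapping $E_1$ to the newly added reticulation edge of $N'$. Hence $s = 0$, $l = 1$, and again $\dAD(N, N') \leq 1$. The \PRM case follows by symmetry. In all three cases $N \neq N'$, so $\dAD(N, N') \geq 1$, giving equality.

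For the backward direction, suppose $\dAD(N, N') = 1$ with maximum agreement graph $G$, so either $(s, l) = (1, 0)$ or $(s, l) = (0, 1)$ (WLOG $r \leq r'$). If $s = 0$ and $l = 1$, then $G$ without its single disagreement edge $E_1$ has no sprouts, and since $N$ is connected this forces the agreement embedding into $N$ to be an isomorphism onto $N$ itself, i.e.\ $G \setminus \{E_1\} \cong N$. The agreement embedding of $G$ into $N'$ then realises $N'$ as $N$ together with one extra directed edge between two subdivision points, which is precisely a \PRP. If $s = 1$ and $l = 0$, \cref{clm:AG:embeddingIFFpruning} tells us that $G$ arises from $N$ by a single pruning of some edge $e = (u, v)$ at $u$ (or $v$) and equally from $N'$ by a single pruning of some edge $e' = (u', v)$ at $u'$ (the shared incident vertex of the sprout being $\bar v$, or symmetrically $\bar u$ for head prunings). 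Reversing the second pruning gives a regraft of the sprout onto $N$ whose result is $N'$, so $N \to G \to N'$ is a single \PRZ. Acyclicity of $N'$ guarantees that the forbidden-descendant/ancestor side conditions of \PRZ are met.

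The main obstacle is the case $(s, l) = (1, 0)$: one must verify that the two prunings (into $N$ and into $N'$) really correspond to the two halves of a single \PRZ rather than to two independent rearrangements that could only be combined by a longer \PR-sequence. This is handled by observing that $G$ has only one sprout in total, so the sprout and its incident edge in $G$ pin down, via the two agreement embeddings, the location of the pruning in $N$ and the location of the regraft in $N'$, and the tail/head type of the sprout matches the tail/head type of the \PRZ. The remaining verifications, such as that the regraft target in $N'$ is not a descendant (resp.\ ancestor) of the pruned subtree, follow from the acyclicity of $N'$ and the definition of an agreement embedding.
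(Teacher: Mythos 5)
Your overall route is the same as the paper's: forward, mimic a \PRZ by a single pruning (read the regraft backwards as a pruning of $N'$ onto the same graph) and represent a \PRP/\PRM by a single disagreement edge; backward, split on whether the maximum agreement graph has one disagreement edge or one sprout and read off a single \PR operation, using the metric property to get $\dAD(N,N')\geq 1$. The forward direction and the disagreement-edge case are argued at essentially the paper's level of detail and are fine.

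The one genuine omission is in the backward case $(s,l)=(1,0)$. Your step ``reversing the second pruning gives a regraft \ldots so $N \to G \to N'$ is a single \PRZ'' silently assumes that the unique sprout is attached to an \emph{edge} of $G$ in both agreement embeddings. But a pruning can also leave a non-suppressible vertex behind -- an isolated labelled $\rho$, an in-degree two, out-degree zero vertex, or an in-degree zero, out-degree two vertex -- and then the sprout is attached to that \emph{vertex}; reversing such a pruning is not a ``subdivide an edge and regraft'' step, so no \PRZ is extracted this way. The paper handles exactly this sub-case: if the sprout is attached to a vertex $\bar x$ of $G$ in the embedding into $N$, then (since only one sprout is available and the image of $\bar x$ needs a sprout attached in any agreement embedding) it is attached to $\bar x$ in the embedding into $N'$ as well, which forces $N = N'$ and contradicts $\dAD(N,N') = 1$. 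Your ``main obstacle'' paragraph addresses matching the pruning/regraft locations and the tail/head type, but not this degenerate attachment; adding the paper's short contradiction argument closes the gap, after which your argument goes through.
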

  \begin{proof}
	Assume $\dPR(N, N') = 1$. Depending on whether $N'$ can be obtained from $N$ by applying a \PRZ or
	a \PRP operation, obtain a maximum agreement graph $G$ by either mimicking the pruning or adding a
	disagreement edge to $N$. In either case, it follows that $\dAD(N, N') = 1$.
	
	Now assume $\dAD(N, N') = 1$ and let $G$ be a maximum agreement graph for $N$ and $N'$. 
	If $G$ contains a disagreement edge, then it is easy to see that $\dPR(N, N') = 1$. 
	So assume $G$ contains a single sprout $\bar u$. 
	If $\bar u$ is attached to a vertex $\bar x$ of $G$ in the agreement embedding into $N$, then it
	has to be attached to $\bar x$ also in the agreement embedding into $N'$. However, then $N = N'$,
	which is a contradiction to $\dAD(N, N') = 1$.
	If, on the other hand, $\bar u$ is attached to an edge of $G$ in the agreement embedding into $N$
	(and thus into $N'$), then finding a \PRZ that transforms $N$ into $N'$ is straightforward. 
	It follows that $\dPR(N, N') = 1$.
  \end{proof}

Consider the two networks $N$ and $N'$ shown in \cref{fig:AD:unequalToPR}. 
Observe that $\dPR(N, N') = 4$, but that $\dAD(N, N') = 3$ (which can both be shown with an exhaustive search).
Intuitively, the differences arises from the fact that no \PRZ can prune, from $N$ or $N'$, any of the three
sprouts of the shown maximum agreement graph $G$ and regraft it without creating a directed cycle. 
Nor is there a shortest \PR-sequence of length three that uses \PRP and \PRM operations.
This shows that, in general, the agreement distance and the \PR-distance differ on \nets.
Since allowing only tail \PRZ like \SNPR does or not allowing parallel edges increases the distance
in general, it follows that the agreement distances also differs from the \SNPR-distance and
distances of other generalisations of \rSPR.
Furthermore, there exist pairs of phylogenetic networks with $r \geq 1$ reticulations for which
every shortest \PR- or \SNPR-sequence contains a phylogenetic tree~\citep{KL18}.
This implies that along such a sequence reticulation edges get removed and added again. 
Therefore, and even if the \PR-distance (or \SNPR-distance) and the agreement distance would be the
same for such a pair, an agreement graph can in general not fully model every shortest \PR- and \SNPR-sequence.
On the upside, however, we prove now that the agreement distance gives a lower and upper
bound for the \PR-distance with constant factors. We start with the lower bound.

\begin{figure}[htb]
\begin{center}
  \includegraphics{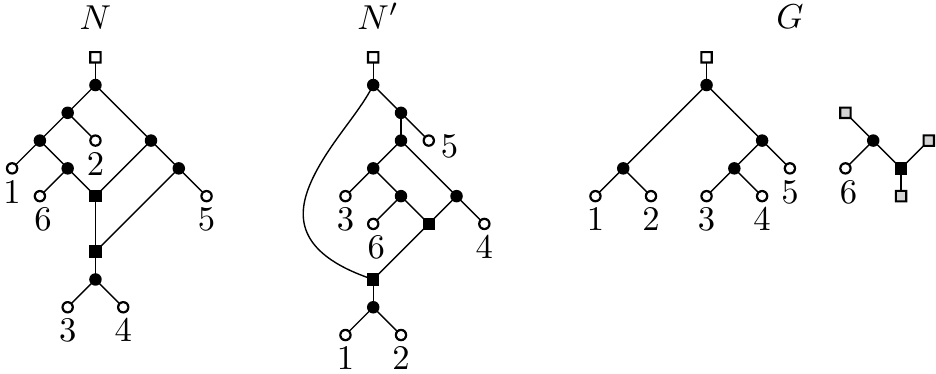}
  \caption{Two phylogenetic networks $N$ and $N'$ with $\dPR(N, N') = 4$, but $\dAD(N, N') = 3$ as
  the maximum agreement graph $G$ shows.}
  \label{fig:AD:unequalToPR}
\end{center}
\end{figure}

\begin{theorem} \label{clm:MAG:PR:lowerBound}
Let $N, N' \in \nets$.
Then $\dAD(N, N') \leq \dPR(N, N')$.
\end{theorem}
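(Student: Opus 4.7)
The plan is to prove this by induction on $k = \dPR(N, N')$, leveraging the metric properties of $\dAD$ established in \cref{clm:AD:metric} together with the one-step comparison already supplied by \cref{clm:AD:neighbourhood}. The idea is that a single \PR operation can change the agreement distance by at most one, so a \PR-sequence of length $k$ can only accumulate at most $k$ units of agreement distance between its endpoints.

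For the base case $k = 0$, we have $N = N'$, and \cref{clm:AD:metric} (the $\dAD(M, M') = 0$ iff $M = M'$ part) gives $\dAD(N, N') = 0$, so the inequality holds trivially. The case $k = 1$ is precisely the content of \cref{clm:AD:neighbourhood}, which gives $\dAD(N, N') = 1$.

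For the inductive step, suppose the statement holds for all pairs at \PR-distance less than $k$, and let $k = \dPR(N, N') \geq 2$. Fix a shortest \PR-sequence $N = N_0, N_1, \ldots, N_{k-1}, N_k = N'$. Then $\dPR(N, N_{k-1}) \leq k - 1$ and $\dPR(N_{k-1}, N') = 1$. The inductive hypothesis yields $\dAD(N, N_{k-1}) \leq k - 1$, and \cref{clm:AD:neighbourhood} yields $\dAD(N_{k-1}, N') \leq 1$. Applying the triangle inequality for $\dAD$ from \cref{clm:AD:metric} gives
\begin{displaymath}
\dAD(N, N') \;\leq\; \dAD(N, N_{k-1}) + \dAD(N_{k-1}, N') \;\leq\; (k-1) + 1 \;=\; \dPR(N, N'),
\end{displaymath}
completing the induction.

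There is essentially no real obstacle left at this point: all the substantial work has been done in \cref{clm:AD:metric} (whose triangle inequality required extensive case analysis on whether reticulation counts increase or decrease along the path through $N'$) and \cref{clm:AD:neighbourhood} (which shows that each \PRZ contributes a single sprout and each \PRP or \PRM contributes a single disagreement edge to a maximum agreement graph). Given these ingredients, the lower bound follows immediately, and indeed this asymmetry is expected — the upper-bound direction, handled in the rest of \cref{sec:bound}, is strictly harder because, as noted after \cref{fig:AD:unequalToPR}, the agreement distance can strictly undercount the \PR-distance.
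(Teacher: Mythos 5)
Your proof is correct, but it takes a genuinely different route from the paper's. You reduce the statement entirely to two results already available at this point: the triangle inequality for $\dAD$ (\cref{clm:AD:metric}) and the one-step equivalence $\dAD(M,M')=1 \Leftrightarrow \dPR(M,M')=1$ (\cref{clm:AD:neighbourhood}); in fact the induction is dispensable, since summing the triangle inequality along a shortest \PR-sequence gives $\dAD(N,N') \leq \sum_{i=1}^{k}\dAD(N_{i-1},N_i) = k$ directly, and your observation that $\dPR(N_{k-1},N')$ is exactly $1$ in a shortest sequence makes the neighbourhood lemma applicable as stated. There is no circularity, since both cited results are proved independently of this theorem, so the argument stands. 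The paper instead proceeds constructively: by induction on $d=\dPR(N,N')$ it takes an agreement graph $G'$ for $N$ and the penultimate network $N''$ and explicitly modifies it -- via \cref{clm:AD:addPruning} and embedding changes, with a case analysis according to whether the final operation is a \PRZ, \PRP or \PRM -- into an agreement graph for $N$ and $N'$ whose total count of sprouts and disagreement edges grows by at most one. What each approach buys: yours is much shorter and cleaner, but it leans on the full strength of the triangle inequality, whose proof is the longest and most delicate part of the paper (with some cases only outlined); the paper's construction avoids that dependence, needing only the local lemmas of \cref{sec:AG}, and it produces an explicit agreement graph witnessing the bound together with per-operation bookkeeping that serves as a warm-up for the upper-bound argument in \cref{clm:MAG:PR:upperBound}.
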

  \begin{proof} 
    Given $N$ and $N'$ with \PR-distance $d = \dPR(N, N')$, we construct an agreement graph $G$ of $N$ and $N'$ 
    with $s$ sprouts in the agreement subgraphs and $l$ disagreement edges such that $s + l \leq d$.
    Let $N$ and $N'$ have $r$ and $r'$ reticulations, respectively. 
    Without loss of generality, assume that $r' \geq r$ and let $l = r' - r$.
	The proof is now by induction on $d$.
	If $d = 0$, then $G = N$ is as desired. 
	If $d = 1$, the statement follows from \cref{clm:AD:neighbourhood}.
	Now assume that for each pair of phylogenetic networks $M, M' \in \nets$ with
	\PR-distance at most $d' < d$ for some arbitrary but fixed $d > 1$ there exists an agreement graph
	of $M$ and $M'$ proving that $\dAD(M, M') \leq d'$.

	Fix a \PR-sequence of length $d$ from $N$ to $N'$.
	Let $N'' \in \nets$ be the network of that sequence such that $\dPR(N, N'') = d - 1$ and $\dPR(N'',	N') = 1$. 
	By the induction hypothesis there exists an agreement graph $G'$ for $N$ and $N''$ showing that $\dAD(N, N'') \leq d - 1$.
	We distinguish whether $N'$ is obtained from $N''$ by a \PRZ, a \PRP, or a \PRM operation.

	First, assume that $N'$ can be obtained from $N''$ by pruning the edge $e = (u, v)$ at $u$. 
	Assume $G'$ contains an edge $\bar e = (\bar u, \bar y)$ such that $\bar u$ is mapped to $u$
	and $\bar e$ to a path containing $e$.
	With \cref{clm:AD:addPruning} prune $\bar e$ at $\bar u$ and obtain $G$.
	Then use the agreement embedding of $G$ into $N''$ to obtain an agreement embedding of $G$ into $N'$. 
	Next, assume $G'$ contains an edge $\bar e' = (\bar x, \bar y)$ such that
	$\bar e'$ is mapped to a path containing $e$ and $u$ as internal vertex.  	
	Then, $G'$ contains a t-sprout $\bar w$ that is mapped to $u$ (and thus attached to $\bar e'$ in $N''$).
	The vertex $\bar w$ cannot be an h-sprout, because $u$ is a tree vertex and the previous case does not apply.
	Let $\bar w$ have the incident edge $(\bar w, \bar z)$.
	Subdivide $\bar e'$ with a new vertex $\bar u$ and identify $\bar w$ with $\bar u$.
	Prune $\bar e = (\bar u, \bar y)$ at $\bar u$ and 
	then use \cref{clm:AD:addPruning} to prune $(\bar x, \bar z)$ at $\bar x$ to reobtain $\bar w$.
	Let $G$ be the resulting graph, which has now an agreement embedding into $N'$.
	Considering the embedding of $G$ into $N''$, apply an embedding change with respect to $\bar u$ and $\bar w$
	to see that $G$ still has an agreement embedding into $N$.
	In either case, since $G$ contains at most one sprout more than $G'$, it follows that $\dAD(N, N') \leq \dAD(N,	N'') + 1 \leq d$. 
	The case where $N'$ is obtained from $N''$ by pruning an h-sprout works analogously.

	Second, assume that $N'$ has been obtained from $N''$ by a \PRM that removed the edge $e = (u,	v)$. 
	Note that then $G'$ contains $l + 1$ disagreement edges. 
	Assume $G'$ contains a disagreement edge $E_j = (\bar x, \bar y)$ that maps to a path $P$ 
		that contains $e$ in the agreement embedding of $G'$ into $N''$.
	Note that $u$ is a tree vertex and $v$ a reticulation.
	Therefore, if $P$ contains $u$ as internal vertex, then a t-sprout $\bar w$ is attached to $E_j$ in $N''$ and is mapped to $u$.
	Apply an embedding change with regards to $\bar w$ and $\bar x$.
	Handle the case where $P$ contains $v$ as internal vertex analogously.
	Then $E_j$ is mapped precisely to $e$.	Hence, obtain $G$ from $G'$ by removing $E_j$.
	The agreement embedding of $G$ into $N$	is then the same as of $G'$ and the agreement embedding of $G$ into $N'$ is derived from that of
	$G'$ into $N''$ by removing $E_j$.
	Now assume that $e$ is not covered by a disagreement edge of $G'$. 
	Let $\bar e = (\bar x, \bar y)$ be the edge of $G'$ that covers $e$. 
	With \cref{clm:AD:addPruning} prune $\bar e$ at $\bar x$ and $\bar y$ such that the resulting graph $G''$
	has at most two sprouts more than $G'$ and an agreement embedding into both $N$ and $N''$.
	Consider $\bar e$ now a disagreement edge of $G''$ and consider a disagreement edge of $G''$ an agreement subgraph.
	Then apply the previous case to obtain $G$.
	In either case, $G$ contains one disagreement edge less and at most two sprouts more in its
	agreement subgraphs and therefore $\dAD(N, N') \leq \dAD(N, N'') + 2 - 1 \leq d$.
	
	Lastly, assume $N'$ has been obtained from $N''$ by a \PRP.
	If $l > 0$, obtain $G$ from $G'$ by adding one disagreement edge.
	If $l = 0$, then $G'$ contains one disagreement edge. Thus obtain $G$ from $G'$ by considering this
	disagreement edge an agreement subgraph.
	In either case, it is straightforward to find agreement embeddings of $G$ into $N$ and $N'$.
	Since $G$ contains either one disagreement edge more or two sprouts more but one disagreement edge
	less, it follows again that $\dAD(N, N') \leq d$. This completes the proof.
  \end{proof}

Let $N, N' \in \nets$ with a maximum agreement graph $G = (V_G, E_G)$. Fix agreement embeddings of
$G$ into $N$ and $N'$ and assume that they fulfill the properties of \cref{clm:AD:specialEmbedding}.
In the proof of the upper bound we will construct a \PR-sequence based on agreement embeddings of $G$ along this sequence. 
To ease talking about \PR operations on networks along the sequence based
on vertices and edges of $G$ we define the following terminology. 
Let $\bar u \in V_G$ be a t-sprout with outgoing edge $\bar e = (\bar u, \bar v) \in E_G$.
Let $e = (u, v)$ be the first edge on the path in $N$ to which $\bar e$ is mapped.
\emph{Pruning} $\bar u$ in $N$ then means that the edge $e$ gets pruned at $u$.
\emph{Regrafting} $\bar u$ to an edge $\bar f \in E_G$ in $N$ then means that $e$ gets regrafted to
the edge $f \in E_N$ that is the first edge on the path to which $\bar f$ is mapped.
Let $\bar x$ be a indegree two, outdegree zero vertex or the singleton labelled $\root$ of $G$.
\emph{Regrafting} $\bar u$ to a vertex $\bar x \in V_G$ in $N$ then means that $e$ gets regrafted to
the edge $f \in E_N$ that is the outgoing edge of the vertex $x$ to which $\bar x$ is mapped.
The terminology for h-sprouts is analogously defined.
More precisely, the differences for an h-sprout $\bar u$ are that the edge $\bar e$ is the incoming
edge of $\bar u$, and that $f$ is the last edge of the respective path to which $\bar f$ is mapped
or the incoming edge of the tree vertex $x$.

We say a sprout $\bar u$ is \emph{prunable} (with respect to $N$) if it is attached to an edge
$\bar e$ in $N$ and \emph{unprunable} if it is attached to a vertex $\bar x$ in $N$.
Let $\bar u$ be a sprout that is attached to an edge $\bar f$ (or vertex $\bar x$) in $N'$.
We say the sprout $\bar u$ is \emph{blocked} if regrafting it to $\bar f$ (or $\bar x$) in
$N$ would create a directed cycle; otherwise we call it \emph{unblocked}. 
This implies that there is at least one sprout $\bar v \in V_G$ on the path from $\bar u$ to $\bar
f$ (or $\bar x$) in the embedding of $G$ into $N$. 
We call such a sprout $\bar v$ \emph{blocking}.
See \cref{fig:AD:blockedBlockingAddable} (a) and (b) for examples.

\begin{figure}[htb]
  \centering
  \includegraphics{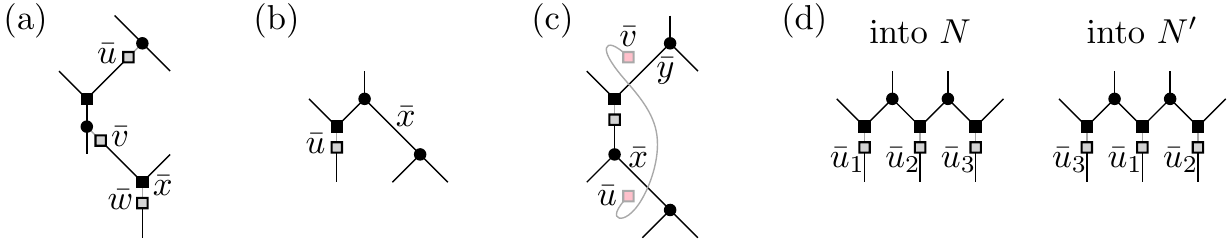}
  \caption{Embeddings of $G$ into $N$ (and $N'$ in (d)). 
  In (a), the sprout $\bar u$ is prunable, but blocked by the blocking sprout $\bar v$ if $\bar u$ is supposed to take the place of $\bar w$. 
  In (b), $\bar u$ is unprunable, but unblocked.
  In (c), the disagreement edge $(\bar u, \bar v)$ is not addable since $\bar y$ is ancestor of $\bar x$.
  In (d), the sprouts $\bar u_1$, $\bar u_2$, and $\bar u_3$ form a replacing cycle.}
  \label{fig:AD:blockedBlockingAddable}
\end{figure}

Let $E_i = (\bar u, \bar v)$ be a disagreement edge and $\bar x$ and $\bar y$ be the vertices or
edges to which $\bar u$ and $\bar v$, respectively, are attached to in $N'$.
If $\bar x$ or $\bar y$ is a disagreement edge $E_j$, then $E_i$ can not be added to $N$ before $E_j$.
Furthermore, if $\bar y$ is an ancestor of $\bar x$ in the embedding into $N$, adding $E_i$ to $N$ would create a directed cycle. 
Therefore we call a disagreement edge $E_i = (\bar u, \bar v)$ \emph{addable} 
if $\bar y$ is not an ancestor of $\bar x$ in $N$ and neither $\bar x$ nor $\bar y$ is a disagreement edge. 
For example, the edge $(\bar u, \bar v)$ in \cref{fig:AD:blockedBlockingAddable} (c) is not addable.

If $\bar u$ is a sprout attached to a vertex $\bar x$ in $N$, then there is a sprout $\bar v$ that
is attached to $\bar x$ in $N'$. We say that $\bar v$ \emph{takes the place} of $\bar u$.
This allows us to define a \emph{replacing sequence} $(\bar u_1, \ldots, \bar u_k)$ of sprouts such
that $\bar u_i$ takes the place of $\bar u_{i + 1}$ with regards to $N$ and $N'$. 
If furthermore $\bar u_k$ takes the place of $\bar u_1$, then we call it a \emph{replacing cycle}.
See \cref{fig:AD:blockedBlockingAddable} (d) for an example.
Note that in a replacing sequence the sprout $\bar u_1$ can be the sprout of a disagreement edge.

\begin{theorem} \label{clm:MAG:PR:upperBound}
Let $N, N' \in \nets$.
Then $\dPR(N, N') \leq 3 \dAD(N, N') $. 
\end{theorem}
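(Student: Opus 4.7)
The plan is induction on $d = \dAD(N, N')$. The base case $d = 0$ forces $N = N'$ by \cref{clm:AD:metric}, giving $\dPR(N, N') = 0$. For the inductive step, fix a maximum agreement graph $G$ and agreement embeddings of $G$ into $N$ and $N'$ that satisfy the properties of \cref{clm:AD:specialEmbedding}. It suffices to exhibit a short \PR{}-sequence from $N$ to some intermediate network $\tilde N$ together with an agreement graph $\tilde G$ for $\tilde N$ and $N'$ such that, for some $k \ge 1$, the sequence has length at most $3k$ and $\dAD(\tilde N, N') \le d - k$; concatenating with the sequence supplied by the induction hypothesis then finishes the proof.

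First I would dispose of the easy cases, each of which achieves $k = 1$ using a single operation. If some sprout $\bar u$ of $G$ is prunable (with respect to $N$) and unblocked, then one \PRZ{} prunes $\bar u$ in $N$ and regrafts it to the edge or vertex to which it is attached in $N'$; the updated embedding has one fewer sprout. If there is an addable disagreement edge $E_i$ and $r' > r$, a single \PRP{} inserts the corresponding edge into $N$ and the disagreement-edge count drops by one; the symmetric case $r' < r$ is handled by a \PRM{} removing such an edge.

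The main obstacles are the three kinds of obstructions defined in the pre-theorem discussion: unprunable sprouts, blocked sprouts (giving rise to replacing cycles), and non-addable disagreement edges. In each non-trivial case I would use up to three \PR{} operations per unit of progress on $\dAD$, via a common trick: first use a \PRP{} to introduce an auxiliary reticulation edge at a carefully chosen location, then use a \PRZ{} (or sequence of \PRZ{} operations) to redirect the problematic sprout through this edge -- which either bypasses the directed cycle that made the sprout blocked, or provides the suppressible tree vertex or attachment target that an unprunable sprout needs -- and finally use a \PRM{} to remove the auxiliary edge. For a single unprunable sprout or a blocked sprout that is not part of a cycle, this gives at most $3$ operations reducing $\dAD$ by one. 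For a replacing cycle of $k \ge 2$ sprouts, one auxiliary \PRP{} breaks the cycle so that after one sprout is redirected through it, the remaining $k-1$ sprouts of the cycle become prunable and unblocked in turn; handling them with one \PRZ{} each and then removing the auxiliary edge with a \PRM{} gives a total of $k + 2 \le 3k$ operations. A non-addable disagreement edge is handled analogously, using \cref{clm:AD:addPruning} to ensure that a sprout can be created where needed.

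The hard part will be verifying that when none of the easy cases applies, at least one of the three canonical obstructions is always present and can be attacked as above without breaking the niceness of the embedding. I would argue by contradiction: if every sprout is prunable and unblocked and every remaining disagreement edge is addable, then the easy cases apply, contradicting the assumption. The ordering condition on disagreement edges from \cref{clm:AD:specialEmbedding} (that $E_i$ may only be attached to $E_j$ with $j < i$) and the separation of agreement-subgraph sprouts from disagreement edges are crucial here: they guarantee that the obstructions can be processed in an order which does not create new obstructions later. An additional bookkeeping task is to exhibit, after each block of at most three operations, a valid agreement graph for the updated pair $\tilde N, N'$ with $\dAD$ strictly smaller; this follows by mimicking the local changes on $G$ (removing a sprout, contracting a disagreement edge into an agreement edge, or replacing a pair of matched sprouts by a single edge), and reapplying \cref{clm:AD:specialEmbedding} to restore the niceness properties for the next inductive step.
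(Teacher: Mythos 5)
Your overall framework (induction on $\dAD$, blocks of at most $3k$ operations reducing the distance by $k$, auxiliary reticulation edges inserted by \PRP to make problematic sprouts movable, amortising the insertion/removal cost over a replacing cycle) is close in spirit to the paper's construction, which builds one global \PR-sequence and charges each operation to a sprout or disagreement edge with a credit of at most three. However, there is a concrete gap in your central device. For an unprunable t-sprout $\bar u$ attached to a vertex $\bar x$ (a reticulation of $N$), your plan is \PRP{} (add auxiliary edge), \PRZ{} (redirect $\bar u$), \PRM{} (remove the auxiliary edge), giving three operations and dropping $\dAD$ by one. But after $\bar u$ is pruned away from below $\bar x$, the auxiliary edge becomes the \emph{outgoing} edge of that reticulation: its tail is a reticulation, so no \PRM{} can delete it (and deleting it would leave an out-degree-zero reticulation). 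The vacated spot can only be cleaned up once the sprout that takes $\bar u$'s place in $N'$ arrives there; this is exactly why the paper keeps such ``shadow edges'' alive across many steps and credits their eventual removal to a \emph{later} sprout or disagreement edge (its Cases (A'), (B'), (C'')). Consequently your per-block bookkeeping ($\dAD(\tilde N, N')\le d-k$ after each block, with the intermediate network free of auxiliary edges) does not go through as stated: either the auxiliary edge survives the block, changing the reticulation count and hence the disagreement-edge count of any certifying agreement graph for $(\tilde N, N')$, or you must fold the entire replacing sequence/cycle into one block, which is essentially the paper's amortisation.

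The second gap is the progress argument. You dispose of it by contradiction: ``if every sprout is prunable and unblocked and every disagreement edge is addable, the easy cases apply.'' That only shows an obstruction \emph{exists}, which is immediate; what must be shown is that some obstruction can always be attacked within budget -- concretely, that after blocking-but-prunable sprouts are moved aside and non-addable disagreement edges attached to vertices are pre-processed, there always remains an \emph{unblocked} sprout to work on. In the paper this is the ``Existence of unblocked sprout'' argument, which needs the acyclicity of $N$ applied to replacing cycles and a growing chain-of-descendants argument interleaving unprunable t- and h-sprouts; it is the genuinely hard part of the proof and is not replaced by your contradiction sketch. Your cycle count $k+2\le 3k$ is fine as far as it goes (and matches the paper's accounting), but without the removable-auxiliary-edge claim repaired and without a real existence/ordering argument, the induction step is not established.
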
 
\begin{proof}
    Let $N, N' \in \nets$ with $r$ and $r'$ reticulations, respectively.
    Without loss of generality, assume that $r' \geq r$ and let $l = r' - r$.
    Let $G$ be a maximum agreement graph for $N$ and $N'$.
    Let $S_1, \ldots, S_k$ be the agreement subgraphs of $G$ and $E_1, \ldots, E_l$ be the
    disagreement edges of $G$.
	Fix agreement embeddings of $G$ into $N$ and into $N'$. 
	For the embedding into $N'$, assume that it	fulfills the properties of \cref{clm:AD:specialEmbedding}.
	That is, no sprout of an agreement subgraph $S_i$ is attached to a disagreement edge $E_j$, 
	that at least one disagreement edge (if	one exists) is not attached to any other disagreement edge, 
	and that $E_i$ may be attached to $E_j$ only if $j < i$.

	Let $d = \dAD(N, N')$. 
	To prove the statement we show how to construct a \PR-sequence 
	\begin{displaymath}
		\sigma = (N = N_0, N_1, \ldots, N_m = N') 
	\end{displaymath}
	with $m \leq 3d$.
	While $G$ has an agreement embedding into $N$ and $N'$, 
		it may not have an agreement embedding for several $N_i$, $i \in \{1, \ldots, m - 1\}$.
	However, starting at $N = N_0$, we preserve the mapping of vertices and edges of $G$ to
	vertices and paths of $N_{i-1}$ to $N_{i}$ with each step. 
	Furthermore, along the sequence we map disagreement edges of $G$ to newly added edges.
	In some cases, it is necessary to add edges to $N_{i-1}$ to obtain $N_{i}$ with a \PRP to which
	no disagreement edge will be mapped. We call such edges \emph{shadow edges}.
	From each $N_{i-1}$ to an $N_i$ we only prune edges at a vertex in $N_i$ to which a sprout and
	its incident edge are mapped, or add a disagreement edge, or add or alter a shadow edge.
	We describe any change of $G$, or of the embeddings of $G$ into $N_i$ or $N'$ explicitly. 

	To keep track of the length $m$	of $\sigma$, we credit every \PR operation either to a sprout or to a disagreement edge. 
	When we obtain $N_m = N'$, each sprout and each disagreement edge will have a credit of at most three and, hence, $m \leq 3d$.
	Now, assume $\sigma$ has been constructed up to $N_{i-1}$. 
	
	To obtain $N_i$ we apply the first applicable case of those described below to a sprout or to a disagreement edge.
	Overall the strategy is to first handle easy cases, that is prunable, unblocked sprouts (Case (A) and (A')) and addable disagreement edges (Case (B) and (B')). 
	Then Case (C), (C') and (C'') handle unprunable, unblocked sprouts.
	With Case (D) prunable, blocking sprouts are moved ``aside'' to make them non-blocking 
	and Case (D') adds disagreement edges whose h-sprouts starts a replacing sequence of h-sprouts.
	After exhaustively applying Case (D) and (D'), we can prove that there always exists a prunable sprout (if any sprouts are left).
	A particular sprout (resp. disagreement edge) is subject of at most one application of Case (D) (resp. (D')) and one other case.
	
	\noindent\textbf{(A) Prunable, unblocked sprout to non-shadow edge.}
	If there is a prunable, unblocked sprout $\bar u$ in $N_{i-1}$, then obtain $N_i$ by pruning $\bar
	u$ in $N_{i-1}$ and regrafting it to the edge $\bar f$ or vertex $\bar x$ to which $\bar u$ is
	attached in $N'$. This step gives $\bar u$ a credit of one operation.
	If $\bar u$ is regrafted to a vertex $\bar x$, let $\bar v$ be the sprout that is attached to
	$\bar x$ in $N_i$ (i.e., $\bar u$ takes the place of $\bar v$).
	Apply an embedding change of $G$ into $N$ with respect to $\bar u$ and $\bar v$.
	This whole step is illustrated in \cref{fig:MAG:PR:PrUnbl}.
	Note that $\bar u$ is now attached either to the same edge $\bar f$ or the same vertex $\bar x$	in both $N_i$ and $N'$. 
	Therefore, for the rest of the proof, fix $\bar u$ to $\bar f$ or identify $\bar u$	with $\bar x$, respectively, in $G$. 
	As a result, $\bar u$ with a credit of only one is now not a sprout anymore and thus not subject of another case.
	
	\begin{figure}[htb]
	\centering
	  \includegraphics{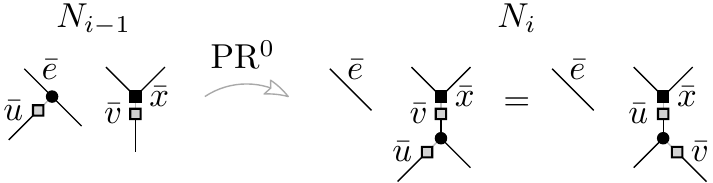}
	  \caption{Illustration of Case (A) where a prunable unblocked sprout $\bar u$  gets regrafted
	  to a vertex $\bar x$, and the subsequent embedding change with regards to $\bar u$ and $\bar v$.}
	  \label{fig:MAG:PR:PrUnbl}
	\end{figure}
	
	\noindent\textbf{(B) Addable disagreement edge without shadow edge.}
	If there exists an addable disagreement edge $E_j$ for $N_{i-1}$, then obtain $N_i$ by adding
	$E_j$ to $N_{i-1}$ with a \PRP. This step gives $E_j$ a credit of one operation. 
	If a sprout of $E_j$ is attached to a vertex in $N'$, then apply again embedding changes of $G$
	into $N_{i}$ like in Case (A). Note that $E_j$ is now attached to the same vertices or edges in
	both $N_i$ and $N'$.
	Therefore, merge the sprouts of $E_j$ with the vertices or edges they are attached to in $G$.
	As a result, $E_j$ with a credit of only one is now no disagreement edge anymore, but an edge of an agreement subgraph $S_{j'}$ of $G$.
	It will therefore not get any further credit.

	\noindent\textbf{(C) Sprout at root, add shadow edge.} 
	If there is an unprunable t-sprout $\bar v$ attached to the root $\root$ in $N_{i-1}$, then there
	is another t-sprout $\bar u$ that is attached to the root in $N'$.
	Assume that $\bar u$ is a sprout of a disagreement edge $(\bar u, \bar w)$ in $N'$, but that Case (B) does not apply. 
	Then $\bar w$ must be attached to another disagreement edge in $N'$.
	This however can be changed with embedding changes (like in \cref{clm:AD:specialEmbedding}) such that
	$(\bar u, \bar w)$ becomes addable and Case (B) applies.
	Therefore assume $\bar u$ is a sprout of an agreement subgraph.
	Since Case (A) does not apply and the root is an ancestor of $\bar u$, it follows
	that $\bar u$ is an unprunable, but unblocked t-sprout in $N_{i-1}$.
	Let $\bar y$ be the in-degree two, out-degree zero vertex to which $\bar u$ is attached in $N_{i-1}$. 
	We now obtain $N_i$ from $N_{i-1}$ by adding and attaching a shadow edge $(\bar w, \bar z)$ 
		from the outgoing edge of $\bar u$ to the incoming edge of leaf $1$ with a \PRP.
	After an embedding change of $G$ into $N_i$ with respect to $\bar w$ and $\bar u$, the sprout $\bar u$ becomes prunable. 
	Give $\bar u$ a credit of one and apply Case (A) to obtain $N_{i+1}$. 
	In total, $\bar u$ gets a credit of two and in $N_{i+1}$ and $N'$ no sprout is attached to the root anymore. 
	This whole step is illustrated in \cref{fig:MAG:PR:Unpr}.
	As mentioned above, the embedding of $G$ into $N_{i+1}$ does not cover all edges anymore, since no
	edge is mapped to the shadow edge.	
	
	\noindent\textbf{(C') Sprout at leaf, add shadow edge.}
	This case is analogous to Case (C) but for h-sprouts. 
	Here, if there is an unprunable h-sprout $\bar v$ attached to a leaf $l$ in $N_{i-1}$, 
	then there is another unprunable, unblocked h-sprout $\bar u$ that takes the place of $\bar v$.
	Then obtain $N_i$ again by adding a shadow edge from the outgoing edge of $\root$ to the incoming edge of $\bar u$. 
	After applying an embedding change, obtain $N_{i+1}$ by	pruning $\bar u$ and attaching it to the incoming edge $\bar f$ of $\bar v$.
	After another embedding change, merge $\bar u$ with the leaf $l$.
	If $l = 1$ and there is a shadow edge $(\bar w, \bar z)$ attached to $\bar f$, then attach $\bar u$ above $\bar z$ to $\bar f$. 
	This way, $\bar z$ is attached to the incoming edge of $l = \bar u$ and not to the incoming edge of $\bar v$ after the embedding change.
	
	\noindent\textbf{(A') Prunable, unblocked sprout to shadow edge.}
	If after the previous two cases, there is again a prunable, unblocked sprout $\bar u$, apply Case (A) again. 
	However, if in this process $\bar u$ gets regrafted to a shadow edge incident to the vertex $\bar x$, 
	then remove the shadow edge with a \PRM after the embedding change of Case (A). 
	This results in a total credit of two for $\bar u$ -- one for the \PRZ to move $\bar u$ and one for the \PRM.
	
	\noindent\textbf{(B') Addable disagreement edge with shadow edge.}
	Similarly, if there is now an addable disagreement edge $E_j = (\bar u, \bar v)$, apply Case (B) in the following way.
	Assume that $\bar v$ of $E_j$ is supposed to get regrafted to a vertex $\bar y$ with an incoming shadow edge $\bar f = (\bar w, \bar z)$.
	Then apply a \PRZ to $N_{i-1}$ to prune $\bar f$ at $\bar w$ and regraft it where $\bar u$ is supposed to be attached. 
	Then again, if $\bar u$ is supposed to be attached to a vertex $\bar x$ with an	outgoing shadow edge $\bar f'$, 
	remove $\bar f'$ with a \PRM operation after an embedding change. 
	This step is illustrated in \cref{fig:MAG:PR:DoubleShadow}.
	The case where only $\bar u$ is supposed to be attached to a vertex with an incident shadow edge 
	but not $\bar v$ is handled analogously. 
	If there is no shadow edge involved for either $\bar u$ or $\bar v$, then Case (B) directly applies.
	In either case, the total credit for $E_j$ is at most two.
	
	\begin{figure}[htb]
	\centering
	  \includegraphics{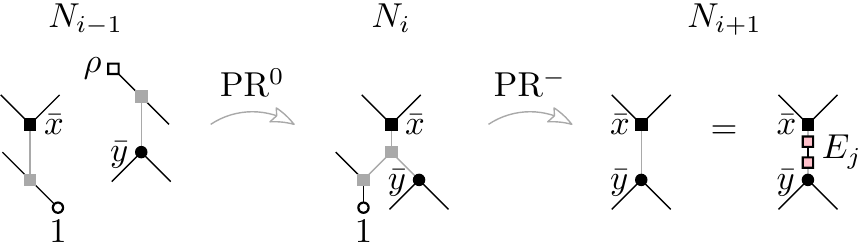}
	  \caption{Illustration of Case (B') with two shadow edges.}
	  \label{fig:MAG:PR:DoubleShadow}
	\end{figure}
	
	The next case is used to decrease the number of blocking sprouts.\\
	\noindent\textbf{(D) Blocked and blocking, but prunable sprout.}
	Let $\bar u$ be a prunable, blocked sprout that is blocking another sprout in $N_{i-1}$.
	Then obtain $N_i$ from $N_{i-1}$ by pruning $\bar u$ and regrafting it to the outgoing edge of $\root$ 
	if $\bar u$ is a t-sprout, or to the incoming edge of leaf 1 otherwise.
	Note that $\bar u$ is now not blocking any other sprout in $N_i$.
	This step gives $\bar u$ a credit of one. 
	Later on, $\bar u$ will get one or two more credit, depending on whether Case (A) or (A') will apply to it. 
	
	\noindent\textbf{(D') Non-addable disagreement edges attached to vertex.}
	Let $E_j = (\bar u, \bar v)$ be an non-addable disagreement edge for which $\bar v$ is attached to a vertex $\bar x$ in $N'$.
	That means that a replacing sequence of h-sprouts starts with $\bar v$ of $E_j$ -- we change this now.
	Obtain $N_i$ from $N_{i-1}$ by adding an edge $(\bar u, \bar v)$ from the outgoing edge of $\root$ to the incoming edge of $\bar x$. 
	Identify $E_j$ with this new edge and then, after an embedding change, merge $\bar v$ with $\bar x$. 
	The vertex $\bar u$ is now a non-blocking and prunable, but blocked t-sprout with a credit of one (just like the sprouts of Case (D)). 
	Note that, after Case (D') does not apply anymore, there can be no replacing sequence of h-sprouts that starts with a sprout of a disagreement edge left. 
	(We do not, maybe even cannot, do the analogous for disagreement edges that start a replacing sequence of t-sprouts.)
	
	Applying Case (A) or Case (A') may now start with a sprout that has already a credit of one.
	However, as in both cases the credit is increased by at most two, the credit will afterwards be at most three.

	So far we have applied Case (A) and (B) until not further possible. 
	Then Case (C) and (C') are applied at most once and $n$ times, respectively.
	We then apply Cases (A), (A'), (B), (B') as long as possible.
	If then applicable we apply Case (D) or (D') and repeat this loop.
	Next, we show that if neither of the previous cases applies but there are still sprouts in
	$N_{i-1}$ that there is then at least one unprunable, unblocked sprout in $N_{i-1}$.
	 
	\noindent\textbf{Existence of unblocked sprout.}
	Assume that there exists a replacing cycle $\tau$ of, without loss of generality, t-sprouts in $N_{i-1}$. 
	Then note that for a t-sprout to be blocked the vertex or edge it will be attached to has to be a descendant. 
	Since phylogenetic networks are acyclic, the sprouts in $\tau$ can not all replace a descendant. 
	Therefore one of the sprouts has to be an unblocked sprout.
	
	Next, assume that there is no replacing cycle in $N_{i-1}$.		
	If no unprunable t-sprout $\bar u$ exists, then the h-sprout with no ancestor h-sprout in $N_{i-1}$ is an unblocked sprout.
	So assume otherwise and let $\bar u$ be an unprunable t-sprout with no descendant t-sprout in $N_{i-1}$.
	If $\bar u$ is unblocked, we are done; so assume otherwise. 
	This means that the vertex or edge to which $\bar u$ is supposed to be regrafted is a descendant of $\bar u$ in $N_{i-1}$.
		Thus, by the choice of $\bar u$, it can only be blocked by an h-sprout $\bar v$.
			Since Case (D) moved prunable, blocking sprouts aside, $\bar v$ has to be unprunable.
			If $\bar v$ is unblocked, we are done; so assume otherwise.
				Then there is a replacing sequence $\tau = (\bar v_1, \ldots, \bar v_m)$ with $\bar v = \bar v_i$ 
				for some $i \in \{2, \ldots, m\}$.
				Note that $\bar v_1$ is prunable since Case (D') does not apply and since there are no replacing
				cycles anymore and thus $\bar v \neq \bar v_1$.
				Since further Case (D) does not apply, $\bar v_1$ is also not a blocking sprout.
				Assuming that there is no unblocked sprout in $\tau$, we know that for every $1 \leq j < i$
				the h-sprouts $\bar v_1$ to $\bar v_j$ are all descendants of $\bar v_{j + 1}$ to $\bar v_i$ and
				thus also of $\bar u$.
	Since $\bar v_1$ is blocked, there has to be an unprunable h-sprout $\bar v'$ blocking $\bar v_1$.
	Note that $\bar v'$ is a descendant of $\bar v_2$ and thus not in $\tau$.
	The situation with $\bar v'$ is now the same as with $\bar v$ 
	and the chain of descendants of h-sprouts below $\bar u$ contains now $\bar v = \bar v_i, \ldots,  \bar v_2, \bar v'$.
		Finally, we either find an unprunable h-sprout in the replacing sequence $\tau' \neq \tau$
		that contains $\bar v'$ or the chain of descendants of h-sprouts below $\bar u$ grows longer with h-sprouts $\bar v'_2$ and $\bar v''$.
		Since $N_{i-1}$ is finite this chain cannot grow indefinitely and thus at some point we find an
		unblocked h-sprout.
	
	\noindent\textbf{(C'') Unprunable, unblocked sprout.}
	If there is an unprunable, unblocked sprout $\bar u$ in $N_{i-1}$ that is attached to the edge $\bar f$ 
	or a vertex $\bar x$ in $N'$ that has no shadow edge attached in $N_{i-1}$, 
	then use the same procedure as in Case (C) or (C') to obtain $N_i$ and then $N_{i+1}$.
	This gives $\bar u$ a credit of two, before it gets merged with $\bar x$ or $\bar f$.
	This step is illustrated in \cref{fig:MAG:PR:Unpr}.
	
	\begin{figure}[htb]
	\centering
	  \includegraphics{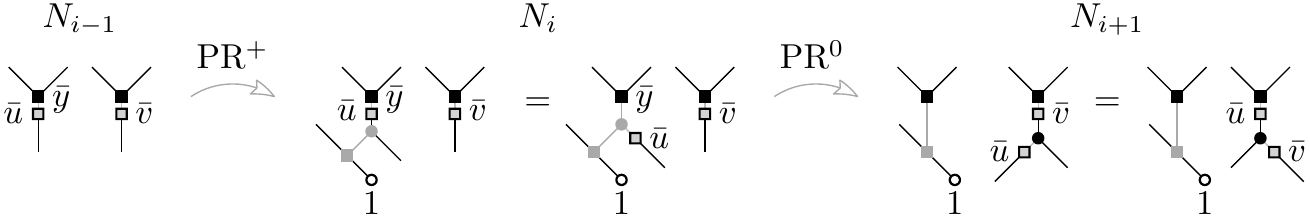}
	  \caption{Illustration of Case (C) and Case (C'') where an unprunable, unblocked sprout $\bar u$
	  is moved to the vertex $\bar x$ with two \PR operations and two embedding changes.}
	  \label{fig:MAG:PR:Unpr}
	\end{figure}
	
	If there is an unprunable, unblocked sprout $\bar u$ in $N_{i-1}$ that is attached to a vertex
	$\bar x$ in $N'$ that has a shadow edge attached in $N_{i-1}$, then apply the process shown in
	\cref{fig:MAG:PR:UnprShadow} to obtain $N_i$ and $N_{i+1}$.
	This gives $\bar u$ a credit of two, before it gets merged with $\bar x$. 
	Note that this moves the shadow edge from $\bar x$ to the vertex to which $\bar u$ was attached to in $N_{i-1}$.

	\begin{figure}[htb]
	\centering
	  \includegraphics{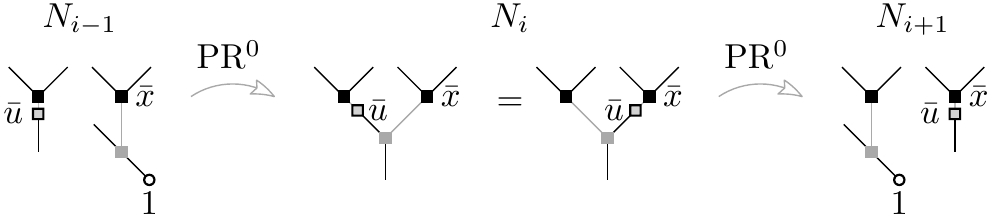}
	  \caption{Illustration of Case (C'') where an unprunable, unblocked sprout $\bar u$ is moved to a
	  vertex $\bar x$ with an incident shadow edge.}
	  \label{fig:MAG:PR:UnprShadow}
	\end{figure}

	Since prunable sprouts cannot block after they got moved aside, since disagreement edges
	cannot block either (by the properties of the agreement embedding into $N'$), 
	and since the number of unprunable sprouts is decreased stepwise,
	the whole process resolves all sprouts and disagreement edges. Hence, $N_m = N'$.
	Since every sprout and every disagreement edge got a credit of at most three, it follows that $m \leq 3d$. 
	This concludes the proof.
  \end{proof}

We prove a relation between the \PR-distance and the \SNPR-distance.

\begin{lemma} \label{clm:MAG:SNPRtoPRrelation}
Let $N, N' \in \nets$. Then 
\begin{displaymath}
	\dPR(N, N') \leq \dSNPR(N, N') \leq 2 \dPR(N, N')\text{.}
\end{displaymath}
\end{lemma}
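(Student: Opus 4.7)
The first inequality, $\dPR(N, N') \leq \dSNPR(N, N')$, is immediate from the definitions: since \SNPR is by definition \PR with head \PRZ moves excluded, every \SNPR-sequence from $N$ to $N'$ is automatically a \PR-sequence of the same length, so the shortest \PR-sequence can only be shorter or equal in length.

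For the upper bound, the plan is to show that every single \PR operation can be simulated by at most two \SNPR operations; concatenating such simulations along a shortest \PR-sequence from $N$ to $N'$ then yields an \SNPR-sequence of length at most $2\dPR(N, N')$. Of the four types of \PR operations, tail \PRZ, \PRP and \PRM are themselves \SNPR operations and contribute one step each. Only a head \PRZ needs simulation. I would simulate a head \PRZ that prunes an edge $(u, v)$ at the reticulation $v$ and regrafts it as $(u, v')$, with $v'$ subdividing some edge $f = (p, q)$ that is not an ancestor of $u$, by combining a \PRP with a \PRM as follows. First apply a \PRP that subdivides the outgoing edge of $u$ with a new vertex $u'$ and subdivides $f$ with $v'$, adding the edge $(u', v')$; this is a valid \PRP because $f$ is not a descendant of (the subdivision vertex above) $u$ by the assumption on the head \PRZ. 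In the resulting network, the edge $(u, v)$ still has $u$ as tail and the reticulation $v$ as head, so we can apply a \PRM to it; this deletes $(u, v)$ and suppresses both $u$ and $v$, yielding precisely the network $N'$ obtained by the original head \PRZ (with $u'$ playing the role of the new tail and the suppressions of $u$ and $v$ reconstituting the same structure).

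The main obstacle is checking that the \PRM in the second step is always applicable. The \PRM requires the tail of the removed edge to be a tree vertex; this holds automatically in the standard case where $u$ is a tree vertex, but requires a separate argument when $u$ is itself a reticulation, i.e.\ when $(u, v)$ is a reticulation-to-reticulation edge. In that edge case, I would perform the simulation in the reverse order or on a symmetric configuration: apply a \PRM to a suitable reticulation edge incident to $v$ (thereby suppressing $v$ as desired by the head \PRZ) and then a \PRP to add back the new edge $(u, v')$ subdividing $f$; the descendant conditions again follow from the constraints of the original head \PRZ. A brief case analysis completes the verification, giving $\dSNPR(N, N') \leq 2 \dPR(N, N')$ as required.
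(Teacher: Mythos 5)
Your overall strategy is the same as the paper's: the first inequality is definitional, and for the second you simulate each step of a shortest \PR-sequence by at most two \SNPR operations, with only head \PRZ needing a \PRP followed by a \PRM. However, there is a genuine gap in how you handle the head \PRZ when the tail $u$ of the pruned edge $e=(u,v)$ is itself a reticulation. Your main construction ends with a \PRM applied to $(u,v)$, which requires $u$ to be a tree vertex, and your proposed fallback for the reticulation case does not work. The only edge incident to $v$ other than $e$ that could possibly be removed by a \PRM is the second reticulation edge $(x,v)$, and this requires $x$ to be a tree vertex, which need not hold. Worse, even when $x$ is a tree vertex, removing $(x,v)$ suppresses both $x$ and $v$: it merges $u$'s outgoing edge with $v$'s outgoing edge (creating an edge from $u$ to $v$'s child $c$) and destroys the branching at $x$. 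The target network $N'$ contains no edge, nor any path through only newly added vertices, from $u$ to $c$, so no single subsequent \PRP can turn this intermediate network into $N'$; a \PRP preserves all existing edges up to one subdivision each. Hence the sketched ``reverse order'' simulation fails in general, and the case analysis you defer to is not a routine verification.

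The paper avoids this case split entirely by choosing the \PRP differently: it subdivides the pruned edge $e$ itself with the new tail vertex $u'$ (and $f$ with the new head $v'$), adds $(u',v')$, and then removes $(u',v)$ with a \PRM. Since $u'$ is a freshly created subdivision vertex, it is always a tree vertex, and $v$ is a reticulation by assumption, so the \PRM is applicable no matter whether $u$ is a tree vertex or a reticulation; suppressing $u'$ and $v$ then yields exactly the network produced by the head \PRZ. Your argument for the generic case (tree-vertex tail) is essentially correct, so the repair is small: replace ``subdivide the outgoing edge of $u$'' by ``subdivide $e$ itself'' and remove $(u',v)$ rather than $(u,v)$, which makes the simulation uniform and eliminates the problematic edge case.
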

  \begin{proof}
    The first inequality follows from the definitions of \PR and \SNPR.
    For the second inequality, let $d = \dPR(N, N')$ and $\sigma = (N = N_0, N_1, \ldots, N_d = N')$
    be a \PR-sequence from $N$ to $N'$ of length $d$.
    Then we can construct an \SNPR-sequence $\sigma^* = (N = M_0, M_1, \ldots, M_k = N')$ with $k \leq 2d$ as follows.
    Assume we have constructed $\sigma^*$ up to $M_{j-1} = N_{i-1}$.
    Then, if $N_i$ is obtained from $N_{i-1}$ by a tail \PRZ or a \PRP or a
    \PRM, then apply the same operation to $M_{j-1}$ to obtain $M_{j}$.
    So assume, otherwise; i.e., $N_i$ is obtained from $N_{i-1}$ by a head \PRZ.
    Let $e = (u, v)$ be the edge that gets pruned at $v$ and $f$ be the edge that gets  subdivided to regraft $e$. 
    Obtain $M_i$ from $M_{i-1}$ with the \SNPRP that subdivides $e$ with a new vertex
    $u'$, subdivides $f$ with a new vertex $v'$, and adds the edge $(u', v')$. 
    Next, obtain $M_{i+1}$ from $M_{i}$ by removing $(u', v)$ and suppressing the resulting degree
    two vertices. Then clearly $M_{i+1} = N_i$.
    Since at most two \SNPR operations are needed per \PR, it follows that $k \leq 2d$.
  \end{proof}

The following corollary is a direct consequence of
\cref{clm:MAG:PR:lowerBound,clm:MAG:PR:upperBound,clm:MAG:SNPRtoPRrelation}.

\begin{corollary}\label{clm:MAG:SNPRtoADrelation}
Let $N, N' \in \nets$.
Then 
\begin{displaymath}
	\dAD(N, N') \leq \dSNPR(N, N') \leq 6 \dAD(N, N')\text{.}
\end{displaymath}
\end{corollary}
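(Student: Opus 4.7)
The plan is essentially to chain the three preceding results and observe that no further work is needed. First, I would set up the two inequalities as separate tasks: the lower bound $\dAD(N, N') \leq \dSNPR(N, N')$ and the upper bound $\dSNPR(N, N') \leq 6\,\dAD(N, N')$. The heart of the proof is just composing inequalities, so there is no real obstacle; the only thing to be careful about is to cite each used result in the right direction.

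For the lower bound, I would combine \cref{clm:MAG:PR:lowerBound}, which gives $\dAD(N, N') \leq \dPR(N, N')$, with the left inequality of \cref{clm:MAG:SNPRtoPRrelation}, which gives $\dPR(N, N') \leq \dSNPR(N, N')$. Transitivity then yields $\dAD(N, N') \leq \dSNPR(N, N')$.

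For the upper bound, I would start from the right inequality of \cref{clm:MAG:SNPRtoPRrelation}, namely $\dSNPR(N, N') \leq 2\,\dPR(N, N')$, and then apply \cref{clm:MAG:PR:upperBound}, which gives $\dPR(N, N') \leq 3\,\dAD(N, N')$. Substituting and multiplying, this gives
\begin{displaymath}
\dSNPR(N, N') \leq 2\,\dPR(N, N') \leq 2 \cdot 3\,\dAD(N, N') = 6\,\dAD(N, N')\text{.}
\end{displaymath}
Combining the two bounds yields the claim. The main (and only) ``obstacle'' is really just bookkeeping: making sure that each of the three lemmas/theorems is invoked in the direction in which it is actually stated, and that the constants $2$ and $3$ are combined into the final factor $6$.
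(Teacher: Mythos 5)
Your proposal is correct and matches the paper's own argument: the corollary is stated there as a direct consequence of \cref{clm:MAG:PR:lowerBound}, \cref{clm:MAG:PR:upperBound} and \cref{clm:MAG:SNPRtoPRrelation}, chained exactly as you do, with the constants $2$ and $3$ combining to give the factor $6$.
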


\section{Concluding remarks} \label{sec:discussion}
In this paper, we defined maximum agreement graphs for two rooted binary phylogenetic networks. 
Like maximum agreement forests for trees, a maximum agreement graph models how the two networks
agree on subgraphs derived from a minimum number of prunings. If the two networks have different
numbers of reticulations, then agreement graphs also model how they disagree on that.
Based on this, we defined the agreement distance on phylogenetic networks. First, we showed that
the agreement distance equals the \rSPR-distance when calculated for phylogenetic trees.
For phylogenetic networks, the agreement distance is a lower bound on the \PR- and \SNPR-distance.
Furthermore, it bounds both the \PR- and \SNPR-distance from above by a factor of at most three and
six, respectively. These upper bounds might not be tight. For example, for the \PR-distance the
bound might be closer to twice the agreement distance. This thought is also motivated by the fact
that the neighbourhoods of a network under \PR and the agreement distance are the same.

While the agreement distance is still NP-hard to compute, it avoids problems of shortest \PR- or
\SNPR-sequences as identified by \citet{KL18}. 
While for such a shortest sequence it might matter at which step of the sequence a reticulation
edge is added, an agreement graph has simply as many disagreement edges as needed. Furthermore,
while a sequence might traverse networks with more or less reticulations than the start and target
network, this is also irrelevant for agreement graphs.
Moreover, the \SNPR-distance between two networks of a certain class, for example of tree-child
networks, can differ if considered in the space of $\nets$ or just within this class, i.e., where
the \SNPR-sequence does not leave the class. This is by definition not the case for the agreement distance.
We therefore hope that it is easier to find exact and approximation algorithms for the agreement
distance than for the \PR-distance, just as it has been more fruitful to work with agreement forests
than with shortest \rSPR-sequences.

Beyond rooted binary phylogenetic networks it is interesting to see whether agreement graphs
and the agreement distance can be generalised to multifurcating phylogenetic networks or
even to directed graphs in general.
For unrooted phylogenetic trees, \citet{AS01} have shown that unrooted agreement
forests characterise the distance of the tree bisection and reconnection (TBR) operation. This
imposes the questions whether agreement graphs can also be defined for unrooted phylogenetic
networks and how they would relate to generalisations of the (unrooted) SPR and the TBR operation.

\acknowledgements
I thank the anonymous reviewers for their great comments and Simone Linz for helpful discussions.

\phantomsection


\begin{thebibliography}{19}
\providecommand{\natexlab}[1]{#1}
\providecommand{\url}[1]{\texttt{#1}}

\bibitem[Allen and Steel(2001)]{AS01}
B.~L. Allen and M.~Steel.
\newblock {Subtree Transfer Operations and Their Induced Metrics on Evolutionary Trees}.
\newblock \emph{Annals of Combinatorics}, 5\penalty0 (1):\penalty0 1--15, 2001.
\newblock \doi{10.1007/s00026-001-8006-8}.

\bibitem[Bonet and St.~John(2009)]{BStJ09}
M.~L. Bonet and K.~St.~John.
\newblock {Efficiently Calculating Evolutionary Tree Measures Using SAT}.
\newblock In O.~Kullmann, editor, \emph{Theory and Applications of
  Satisfiability Testing - SAT 2009}, pages 4--17. Springer Berlin Heidelberg, 2009.
\newblock \doi{10.1007/978-3-642-02777-2\_3}.

\bibitem[Bordewich and Semple(2005)]{BS05}
M.~Bordewich and C.~Semple.
\newblock {On the Computational Complexity of the Rooted Subtree Prune and Regraft Distance}.
\newblock \emph{Annals of Combinatorics}, 8\penalty0 (4):\penalty0 409--423, 2005.
\newblock \doi{10.1007/s00026-004-0229-z}.

\bibitem[Bordewich et~al.(2008)Bordewich, McCartin, and Semple]{BMS08}
M.~Bordewich, C.~McCartin, and C.~Semple.
\newblock A 3-approximation algorithm for the subtree distance between phylogenies.
\newblock \emph{Journal of Discrete Algorithms}, 6\penalty0 (3):\penalty0 458--471, 2008.
\newblock \doi{10.1016/j.jda.2007.10.002}.

\bibitem[Bordewich et~al.(2017{\natexlab{a}})Bordewich, Linz, and Semple]{BLS17}
M.~Bordewich, S.~Linz, and C.~Semple.
\newblock Lost in space? {G}eneralising subtree prune and regraft to spaces of phylogenetic networks.
\newblock \emph{Journal of Theoretical Biology}, 423:\penalty0 1--12, 2017{\natexlab{a}}.
\newblock \doi{10.1016/j.jtbi.2017.03.032}.

\bibitem[Bordewich et~al.(2017{\natexlab{b}})Bordewich, Scornavacca, Tokac, and Weller]{BSTW17}
M.~Bordewich, C.~Scornavacca, N.~Tokac, and M.~Weller.
\newblock On the fixed parameter tractability of agreement-based phylogenetic distances.
\newblock \emph{Journal of Mathematical Biology}, 74\penalty0 (1):\penalty0 239--257, 2017{\natexlab{b}}.
\newblock \doi{10.1007/s00285-016-1023-3}.

\bibitem[Chen et~al.(2015)Chen, Fan, and Sze]{CFS15}
J.~Chen, J.-H. Fan, and S.-H. Sze.
\newblock Parameterized and approximation algorithms for maximum agreement forest in multifurcating trees.
\newblock \emph{Theoretical Computer Science}, 562:\penalty0 496--512, 2015.
\newblock \doi{10.1016/j.tcs.2014.10.031}.

\bibitem[Dunn(2014)]{Dun14}
M.~Dunn.
\newblock Language phylogenies.
\newblock In C.~Bowern and B.~Evans, editors, \emph{The Routledge Handbook of Historical Linguistics}, chapter~7. Routledge, 2014.
\newblock \doi{10.4324/9781315794013}.

\bibitem[Felsenstein(2004)]{Fel04}
J.~Felsenstein.
\newblock \emph{Inferring phylogenies}, volume~2.
\newblock Sinauer Associates, 2004.

\bibitem[Francis et~al.(2018)Francis, Huber, Moulton, and Wu]{FHMW17}
A.~Francis, K.~T. Huber, V.~Moulton, and T.~Wu.
\newblock Bounds for phylogenetic network space metrics.
\newblock \emph{Journal of Mathematical Biology}, 76\penalty0 (5):\penalty0
  1229--1248, 2018.
\newblock \doi{10.1007/s00285-017-1171-0}.

\bibitem[Gambette et~al.(2017)Gambette, van Iersel, Jones, Lafond, Pardi, and Scornavacca]{GvIJLPS17}
P.~Gambette, L.~van Iersel, M.~Jones, M.~Lafond, F.~Pardi, and C.~Scornavacca.
\newblock Rearrangement moves on rooted phylogenetic networks.
\newblock \emph{PLOS Computational Biology}, 13\penalty0 (8):\penalty0 1--21, 2017.
\newblock \doi{10.1371/journal.pcbi.1005611}.

\bibitem[Huson et~al.(2010)Huson, Rupp, and Scornavacca]{HRS10}
D.~H. Huson, R.~Rupp, and C.~Scornavacca.
\newblock \emph{{Phylogenetic Networks: Concepts, Algorithms and Applications}}.
\newblock Cambridge University Press, 2010.

\bibitem[Janssen et~al.(2018)Janssen, Jones, Erd{\H{o}}s, van Iersel, and Scornavacca]{JJEvIS17}
R.~Janssen, M.~Jones, P.~L. Erd{\H{o}}s, L.~van Iersel, and C.~Scornavacca.
\newblock {Exploring the Tiers of Rooted Phylogenetic Network Space Using Tail Moves}.
\newblock \emph{Bulletin of Mathematical Biology}, 80\penalty0 (8):\penalty0
  2177--2208, 2018.
\newblock \doi{10.1007/s11538-018-0452-0}.

\bibitem[{Klawitter}(2018)]{Kla17}
J.~{Klawitter}.
\newblock The {SNPR} neighbourhood of tree-child networks.
\newblock \emph{Journal of Graph Algorithms and Applications}, 22\penalty0 (2):\penalty0 329--355, 2018.
\newblock \doi{10.7155/jgaa.00472}.

\bibitem[Klawitter and Linz(2019)]{KL18}
J.~Klawitter and S.~Linz.
\newblock {On the Subnet Prune and Regraft Distance}.
\newblock \emph{Electronic Journal of Combinatorics}, 26:\penalty0 329--355, 2019.
\newblock URL \url{www.combinatorics.org/ojs/index.php/eljc/article/view/v26i2p3}.

\bibitem[Semple and Steel(2003)]{SS03}
C.~Semple and M.~Steel.
\newblock \emph{Phylogenetics}, volume~24.
\newblock Oxford University Press on Demand, 2003.

\bibitem[St.~John(2017)]{StJ17}
K.~St.~John.
\newblock {Review Paper: The Shape of Phylogenetic Treespace}.
\newblock \emph{Systematic Biology}, 66\penalty0 (1):\penalty0 e83--e94, 2017.
\newblock \doi{10.1093/sysbio/syw025}.

\bibitem[Whidden et~al.(2013)Whidden, Beiko, and Zeh]{WBZ13}
C.~Whidden, R.~G. Beiko, and N.~Zeh.
\newblock {Fixed-Parameter Algorithms for Maximum Agreement Forests}.
\newblock \emph{SIAM Journal on Computing}, 42\penalty0 (4):\penalty0 1431--1466, 2013.
\newblock \doi{10.1137/110845045}.

\bibitem[Wu(2009)]{Wu09}
Y.~Wu.
\newblock A practical method for exact computation of subtree prune and regraft distance.
\newblock \emph{Bioinformatics}, 25\penalty0 (2):\penalty0 190--196, 2009.
\newblock \doi{10.1093/bioinformatics/btn606}.
\end{thebibliography}

\end{document}